\begin{document}
\bibliographystyle{plain}
\newfont{\teneufm}{eufm10}
\newfont{\seveneufm}{eufm7}
\newfont{\fiveeufm}{eufm5}
%
%
\newfam\eufmfam
              \textfont\eufmfam=\teneufm \scriptfont\eufmfam=\seveneufm
              \scriptscriptfont\eufmfam=\fiveeufm
\def\bbbr{{\rm I\!R}}
\def\bbbm{{\rm I\!M}}
\def\bbbn{{\rm I\!N}}
\def\bbbf{{\rm I\!F}}
\def\bbbh{{\rm I\!H}}
\def\bbbk{{\rm I\!K}}
\def\bbbp{{\rm I\!P}}
\def\bbbone{{\mathchoice {\rm 1\mskip-4mu l} {\rm 1\mskip-4mu l}
{\rm 1\mskip-4.5mu l} {\rm 1\mskip-5mu l}}}
\def\bbbc{{\mathchoice {\setbox0=\hbox{$\displaystyle\rm C$}\hbox{\hbox
to0pt{\kern0.4\wd0\vrule height0.9\ht0\hss}\box0}}
{\setbox0=\hbox{$\textstyle\rm C$}\hbox{\hbox
to0pt{\kern0.4\wd0\vrule height0.9\ht0\hss}\box0}}
{\setbox0=\hbox{$\scriptstyle\rm C$}\hbox{\hbox
to0pt{\kern0.4\wd0\vrule height0.9\ht0\hss}\box0}}
{\setbox0=\hbox{$\scriptscriptstyle\rm C$}\hbox{\hbox
to0pt{\kern0.4\wd0\vrule height0.9\ht0\hss}\box0}}}}
\def\bbbq{{\mathchoice {\setbox0=\hbox{$\displaystyle\rm
Q$}\hbox{\raise
0.15\ht0\hbox to0pt{\kern0.4\wd0\vrule height0.8\ht0\hss}\box0}}
{\setbox0=\hbox{$\textstyle\rm Q$}\hbox{\raise
0.15\ht0\hbox to0pt{\kern0.4\wd0\vrule height0.8\ht0\hss}\box0}}
{\setbox0=\hbox{$\scriptstyle\rm Q$}\hbox{\raise
0.15\ht0\hbox to0pt{\kern0.4\wd0\vrule height0.7\ht0\hss}\box0}}
{\setbox0=\hbox{$\scriptscriptstyle\rm Q$}\hbox{\raise
0.15\ht0\hbox to0pt{\kern0.4\wd0\vrule height0.7\ht0\hss}\box0}}}}
\def\bbbt{{\mathchoice {\setbox0=\hbox{$\displaystyle\rm
T$}\hbox{\hbox to0pt{\kern0.3\wd0\vrule height0.9\ht0\hss}\box0}}
{\setbox0=\hbox{$\textstyle\rm T$}\hbox{\hbox
to0pt{\kern0.3\wd0\vrule height0.9\ht0\hss}\box0}}
{\setbox0=\hbox{$\scriptstyle\rm T$}\hbox{\hbox
to0pt{\kern0.3\wd0\vrule height0.9\ht0\hss}\box0}}
{\setbox0=\hbox{$\scriptscriptstyle\rm T$}\hbox{\hbox
to0pt{\kern0.3\wd0\vrule height0.9\ht0\hss}\box0}}}}
\def\bbbs{{\mathchoice
{\setbox0=\hbox{$\displaystyle     \rm S$}\hbox{\raise0.5\ht0\hbox
to0pt{\kern0.35\wd0\vrule height0.45\ht0\hss}\hbox
to0pt{\kern0.55\wd0\vrule height0.5\ht0\hss}\box0}}
{\setbox0=\hbox{$\textstyle        \rm S$}\hbox{\raise0.5\ht0\hbox
to0pt{\kern0.35\wd0\vrule height0.45\ht0\hss}\hbox
to0pt{\kern0.55\wd0\vrule height0.5\ht0\hss}\box0}}
{\setbox0=\hbox{$\scriptstyle      \rm S$}\hbox{\raise0.5\ht0\hbox
to0pt{\kern0.35\wd0\vrule height0.45\ht0\hss}\raise0.05\ht0\hbox
to0pt{\kern0.5\wd0\vrule height0.45\ht0\hss}\box0}}
{\setbox0=\hbox{$\scriptscriptstyle\rm S$}\hbox{\raise0.5\ht0\hbox
to0pt{\kern0.4\wd0\vrule height0.45\ht0\hss}\raise0.05\ht0\hbox
to0pt{\kern0.55\wd0\vrule height0.45\ht0\hss}\box0}}}}
\def\bbbz{{\mathchoice {\hbox{$\sf\textstyle Z\kern-0.4em Z$}}
{\hbox{$\sf\textstyle Z\kern-0.4em Z$}}
{\hbox{$\sf\scriptstyle Z\kern-0.3em Z$}}
{\hbox{$\sf\scriptscriptstyle Z\kern-0.2em Z$}}}}
\def\ts{\thinspace}

\newtheorem{theorem}{Theorem}
\newtheorem{lemma}[theorem]{Lemma}
\newtheorem{claim}[theorem]{Claim}
\newtheorem{cor}[theorem]{Corollary}
\newtheorem{prop}[theorem]{Proposition}
\newtheorem{definition}[theorem]{Definition}
\newtheorem{remark}[theorem]{Remark}
\newtheorem{question}[theorem]{Open Question}

\def\qed{\ifmmode
\squareforqed\else{\unskip\nobreak\hfil
\penalty50\hskip1em\null\nobreak\hfil\squareforqed
\parfillskip=0pt\finalhyphendemerits=0\endgraf}\fi}

\def\squareforqed{\hbox{\rlap{$\sqcap$}$\sqcup$}}

\def \A {{\mathbb A}}
\def \C {{\mathbb C}}
\def \F {{\mathbb F}}
\def \L {{\mathbb L}}
\def \K {{\mathbb K}}
\def \Q {{\mathbb Q}}
\def \Z {{\mathbb Z}}
\def\cA{{\mathcal A}}
\def\cB{{\mathcal B}}
\def\cC{{\mathcal C}}
\def\cD{{\mathcal D}}
\def\cE{{\mathcal E}}
\def\cF{{\mathcal F}}
\def\cG{{\mathcal G}}
\def\cH{{\mathcal H}}
\def\cI{{\mathcal I}}
\def\cJ{{\mathcal J}}
\def\cK{{\mathcal K}}
\def\cL{{\mathcal L}}
\def\cM{{\mathcal M}}
\def\cN{{\mathcal N}}
\def\cO{{\mathcal O}}
\def\cP{{\mathcal P}}
\def\cQ{{\mathcal Q}}
\def\cR{{\mathcal R}}
\def\cS{{\mathcal S}}
\def\cT{{\mathcal T}}
\def\cU{{\mathcal U}}
\def\cV{{\mathcal V}}
\def\cW{{\mathcal W}}
\def\cX{{\mathcal X}}
\def\cY{{\mathcal Y}}
\def\cZ{{\mathcal Z}}
\newcommand{\rmod}[1]{\: \mbox{mod}\: #1}

\def\tcN{\cN^\mathbf{c}}
\def\F{\mathbb F}
\def\Tr{\operatorname{Tr}}
\def\mand{\qquad \mbox{and} \qquad}
\renewcommand{\vec}[1]{\mathbf{#1}}
\def\eqref#1{(\ref{#1})}
\newcommand{\ignore}[1]{}
\hyphenation{re-pub-lished}
\parskip 1.5 mm
\def\lln{{\mathrm Lnln}}
\def\Res{\mathrm{Res}\,}
\def\lcm{\mathrm{lcm}\,}
\def\F{{\bbbf}}
\def\Fp{\F_p}
\def\fp{\Fp^*}
\def\Fq{\F_q}
\def\ff{\F_2}
\def\ffn{\F_{2^n}}
\def\K{{\bbbk}}
\def \Z{{\bbbz}}
\def \N{{\bbbn}}
\def\Q{{\bbbq}}
\def \R{{\bbbr}}
\def \P{{\bbbp}}
\def\Zm{\Z_m}
\def \Um{{\mathcal U}_m}
\def \Bf{\frak B}
\def\Km{\cK_\mu}
\def\va {{\mathbf a}}
\def \vb {{\mathbf b}}
\def \vc {{\mathbf c}}
\def\vx{{\mathbf x}}
\def \vr {{\mathbf r}}
\def \vv {{\mathbf v}}
\def\vu{{\mathbf u}}
\def \vw{{\mathbf w}}
\def \vz {{\mathbfz}}
\def\\{\cr}
\def\({\left(}
\def\){\right)}
\def\fl#1{\left\lfloor#1\right\rfloor}
\def\rf#1{\left\lceil#1\right\rceil}
\def\AST{\mathcal{A}_{\mathrm{ST}}}
\def\AU{\mathcal{A}_{\mathrm{U}}}

\newcommand{\floor}[1]{\lfloor {#1} \rfloor}

\newcommand{\comm}[1]{\marginpar{
\vskip-\baselineskip 
\raggedright\footnotesize
\itshape\hrule\smallskip#1\par\smallskip\hrule}}
\def\rem{{\mathrm{\,rem\,}}}
\def\dist {{\mathrm{\,dist\,}}}
\def\etal{{\it et al.}}
\def\ie{{\it i.e. }}
\def\veps{{\varepsilon}}
\def\eps{{\eta}}
\def\ind#1{{\mathrm {ind}}\,#1}
               \def \MSB{{\mathrm{MSB}}}
\newcommand{\abs}[1]{\left| #1 \right|}

\title{On the Typical Size and Cancelations Among the Coefficients of
Some Modular Forms}

\author{{\sc Florian~Luca}\\ 
{School of Mathematics, University of the Witwatersrand}\\ 
{P. O. Box Wits 2050, South Africa}\\
{and}\\
{Mathematical Institute UNAM, Juriquilla}\\ 
{76230 Santiago de Quer\'etaro, M\'exico}\\
{fluca@matmor.unam.mx}\\
\and
{\sc Maksym Radziwi\l\l} \\
Centres de Recherches Mathematiques \\ Universit{\'e} de Montr{\'e}al, P. O. Box 6128\\
Montreal, QC, H3C 3J7, Canada \\
{\tt radziwill@crm.umontreal.ca}
\and
{\sc Igor E. Shparlinski}  \\
Department of Pure Mathematics\\
University of New South Wales\\
Sydney, NSW 2052, Australia\\
{\tt igor.shparlinski@unsw.edu.au}
}

\date{\today}
\pagenumbering{arabic}

\maketitle

\newpage
\begin{abstract}
We obtain a nontrivial upper bound for almost all
elements of the sequences of real numbers which are multiplicative
and at the prime indices are distributed according
to the Sato--Tate density. Examples of such sequences come from coefficients of several $L$-functions 
of elliptic curves and modular forms. In particular, we show that $|\tau(n)|\le n^{11/2} (\log n)^{-1/2+o(1)}$ for 
a set of $n$ of asymptotic density 1, where $\tau(n)$ is the Ramanujan $\tau$ function while the standard argument yields $\log 2$ instead of $-1/2$ in the power of the logarithm.
Another consequence of our result is that in the number of representations 
of $n$ by a binary quadratic form one has slightly more than square-root cancellations for almost all integers $n$.
  
 In
addition we obtain a central limit theorem for such sequences, assuming 
a weak hypothesis on the rate of convergence to the Sato--Tate law. 
For Fourier coefficients of
primitive holomorphic cusp forms such a hypothesis is known conditionally assuming the automorphy of all symmetric powers of the form and seems to be within reach unconditionally using the currently established potential automorphy.
 \end{abstract}

\section{Introduction}

\subsection{Background and motivation}

Let $\AST$ be the class of infinite sequences $\{a_n\}_{n\ge 1}$ of real numbers, which satisfy
the following properties:
\begin{itemize}
\item {\it Multiplicativity:\/} for any coprime positive integers $m,n$ 
we have $a_{mn} = a_m a_n$.
\item {\it Sato--Tate distribution:\/} for any prime $p$ we have 
$a_p \in [-2,2]$
for the angles $\vartheta_p\in [0,\pi)$ defined by
$a_p = 2 \cos \vartheta_p$, and any $\alpha \in [0, \pi)$ we
have
$$
\frac{\#\{p \le x~:~p~\mathrm{prime}, \ \vartheta_p \in [0,\alpha]\}}{\pi(x)}
\to \frac{2}{\pi} \int_0^{\alpha} \sin^2 \vartheta\, d\vartheta,
\quad \text{as}\ x\to \infty,
$$
where, as usual, $\pi(x)$ denotes the prime counting function of all primes $p\le x$.
\item {\it Growth on prime powers:\/} There exist a  
constant $\varrho>0$ such that for any integer $a\ge 2$ and 
prime $p$ we have $|a_{p^{a}}| \le p^{(a-1)/2 - \varrho}$. 
\end{itemize}

Very often such 
sequences come in the form $\lambda(n)/n^{\gamma}$ 
with some real $\gamma > 0$, 
where  
$\{\lambda(n)\}_{n \ge 1}$ is the sequence of coefficients of a Dirichlet series of a
certain
$L$-function or of Fourier  coefficients of a
certain modular form. 

In particular, we recall the 
striking results of Barnet-Lamb,  Geraghty,
Harris, and Taylor~\cite{B-LGHT}, 
Clozel,  Harris and Taylor~\cite{CHT}, 
Harris,  Shepherd-Barron and  Taylor~\cite{HS-BT},  
who established that Fourier  coefficients of several types of modular forms,
after an appropriate normalisation, belong to the class $\AST$.

The two most famous examples of  such sequences to which the results 
of~\cite{B-LGHT,CHT,HS-BT} apply are:
\begin{itemize}
\item {\it Elliptic curves\/} and are given by 
$t_n(E)/n^{1/2}$, where $\{t_n(E)\}_{n \ge 1}$ are the coefficients
of the $L$-function of an elliptic curve $E$ 
(see~\cite[Section~8.1]{Iwan} or~\cite[Section~14.4]{IwKow}).

\item {\it Ramanujan $\tau$-function\/} and are given by 
$\tau(n)/n^{11/2}$, where $\tau(n)$ is the Ramanujan $\tau$-function 
(see~\cite{Ra}).
\end{itemize}

Here, we obtain a nontrivial upper bound on the size of $|a_n|$ that 
holds for almost all $n$ and we also show that there are 
nontrivial cancellations in the summatory function of 
 $\{a_n\}_{n\ge 1} \in \AST$.  
 
We note that our results can be viewed as analogues of those 
of Fouvry and Michel~\cite{FoMic1,FoMic2} concerning Kloosterman
sums. However the approaches of~\cite{FoMic1,FoMic2}  and of our 
paper are very different. In particular, Kloosterman sums do not 
satisfy the multiplicativity condition in the definition of the 
class $\AST$.  

 \subsection{Notation}
 
 Throughout the paper, the implied constants in the symbols `$O$',  `$\ll$' 
and `$\gg$'
may occasionally, where obvious, depend on the real parameters $A$ and $\varrho$ 
and are absolute, otherwise.
We recall that the notations $U = O(V)$,  $U \ll V$  and $V \gg U$ are all
equivalent to the assertion that the inequality $|U|\le c|V|$ holds for some
constant $c>0$.

We always use $n$ to denote a positive integer and
use $p,~q$ to denote primes. We write $\log x$ for the natural logarithm of $x\ge 1$. We put $\log_1 x=\max\{\log x,1\}$ and for $k\ge 2$, we define $\log_k x$ as $\log_k x=\log_1 \log_{k-1} x$. 
Thus, for large $x$, $\log_k x$ coincides with the $k$ fold iterated composition of the natural logarithm function evaluated in $x$ (and it is $1$ for smaller values of $x$).

\subsection{Main Results}

We say that some property holds for {\it almost all\/} $n$ if the
number of $n \in [1,x]$ for which it fails is $o(x)$ as $x \to 
\infty$.  

 \begin{theorem}
 \label{thm:AST}
 For any sequence $\{a_n\}_{n \ge 1} \in \AST$,  the inequality
 $$
 |a_n| \le (\log n)^{-1/2+o(1)}
 $$
holds for almost all positive integers $n$. 
 \end{theorem}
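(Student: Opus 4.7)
The plan is to control $|a_n|$ on average through a fractional moment. For a small parameter $s > 0$ to be chosen later, I aim to establish
\begin{equation*}
\sum_{n \le x} |a_n|^{2s} \ll_s x (\log x)^{-s + O(s^2) + o(1)}.
\end{equation*}
Granted this, Markov's inequality combined with the observation that $(\log n)^{-1/2+\veps} \ge (\log x)^{-1/2+\veps}$ for $n \le x$ and $\veps < 1/2$ yields, for any fixed $\veps > 0$,
\begin{equation*}
\#\{n \le x : |a_n| \ge (\log n)^{-1/2+\veps}\} \ll_s x(\log x)^{-2s\veps + O(s^2) + o(1)},
\end{equation*}
which is $o(x)$ once $s = s(\veps)$ is chosen small enough. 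A routine diagonal argument letting $\veps \to 0$ then delivers $|a_n| \le (\log n)^{-1/2+o(1)}$ for almost all $n$.

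The engine of the argument is a Sato--Tate computation. A short integration by parts (or Fourier-series manipulation) shows
\begin{equation*}
\frac{2}{\pi}\int_0^{\pi} \log|2\cos\vartheta|\,\sin^2\vartheta\,d\vartheta = -\tfrac{1}{2},
\end{equation*}
which is precisely the source of the exponent $-1/2$ in the theorem. Differentiating
\begin{equation*}
\mu_s := \frac{2}{\pi}\int_0^{\pi} |2\cos\vartheta|^{2s}\sin^2\vartheta\,d\vartheta
\end{equation*}
at $s = 0$ (legitimate because both $\log|2\cos\vartheta|$ and $\log^2|2\cos\vartheta|$ are integrable against $\sin^2\vartheta\,d\vartheta$) yields $\mu_s = 1 - s + O(s^2)$ as $s \to 0^+$. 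The test function $|2\cos\vartheta|^{2s}$ is bounded and continuous on $[0,\pi]$ for every $s > 0$, so the weak-convergence form of Sato--Tate built into $\AST$ applies, and together with Abel summation and the prime number theorem it produces
\begin{equation*}
\sum_{p \le x}\frac{|a_p|^{2s}}{p} = \mu_s \log\log x + o_s(\log\log x).
\end{equation*}

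Passage from primes to integers is then routine: a standard Shiu-type upper bound for sums of non-negative multiplicative functions (applicable because $|a_p|^{2s} \le 4^s$) gives
\begin{equation*}
\sum_{n \le x}|a_n|^{2s} \ll \frac{x}{\log x}\exp\!\left(\sum_{p \le x}\frac{|a_p|^{2s}}{p}\right) \ll x(\log x)^{\mu_s - 1 + o(1)} = x(\log x)^{-s + O(s^2) + o(1)}.
\end{equation*}
The higher prime-power terms are harmless: the growth hypothesis $|a_{p^k}|\le p^{(k-1)/2-\varrho}$ makes $\sum_{k \ge 2}|a_{p^k}|^{2s}/p^k = O(p^{-1-2s\varrho})$, which is absolutely summable over $p$. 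The main obstacle is thus the prime-level step, namely extracting a quantitative asymptotic from a purely qualitative Sato--Tate hypothesis. What makes it work is the observation that, while $\log|a_p|$ itself is unbounded and so cannot be treated by weak convergence directly, its exponentiated version $|a_p|^{2s}$ is bounded and continuous for every $s>0$; the resulting $o_s(\log\log x)$ error then costs only $o(1)$ in the final exponent --- which is precisely good enough to land at $(\log n)^{-1/2+o(1)}$.
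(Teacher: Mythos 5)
Your argument is essentially the paper's own: both bound the fractional moment $\sum_{n\le x}|a_n|^{\gamma}$ (your $\gamma=2s$) via the Euler product and a Hall--Tenenbaum/Shiu-type mean-value bound, use the Sato--Tate integral $h(\gamma)=\frac{2}{\pi}\int_0^\pi(2|\cos\vartheta|)^\gamma\sin^2\vartheta\,d\vartheta = 1-\gamma/2+O(\gamma^2)$ with $h'(0)=-1/2$ as the source of the exponent, and then conclude by a Chebyshev/Markov count followed by letting $\varepsilon\to 0$. The only differences are cosmetic (parametrization by $2s$ versus $\gamma$, and folding the Euler-product step directly into the Shiu bound), so the proposal matches the paper's proof.
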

It is certainly possible to construct artificial sequences $a_n \in \AST$ such that $a_n = 0$ for almost all $n$. However, if $a_n = t_n(E)$ and $E$ is a non-CM elliptic curve or $a_n = \tau(n)$, then the exponent $1/2$
in  Theorem~\ref{thm:AST}  is optimal (see Theorem~\ref{thm:CLT} below).

It is well-known the number
of representations of $n$ in terms of {\it a binary quadratic form\/} 
can be expressed  via coefficients of some  cusp forms, 
see~\cite[Chapter~11]{Iwan} and~\cite[Chapter~1]{Sarn}, 
Thus, one interesting consequence of our Theorem~\ref{thm:AST} is that in the number
of such representations has slightly more than square-root cancellation in the error term 
for almost all $n$ (however making a precise general statements requires imposing 
a series of technical conditions and maybe quite cluttered). 

With regards to  
the cancellations and sign changes, 
we prove the following result.
 
 \begin{theorem}
 \label{thm:AST aver}
 For any sequence $\{a_n\}_{n \ge 1} \in \AST$,  the estimate
 $$
 \sum_{n\le x} a_n=o\(\sum_{n\le x} |a_n|\)
 $$
 holds as $x\to\infty$. 
 \end{theorem}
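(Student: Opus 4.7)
The plan is to apply a Hal\'asz-type mean value theorem for real-valued multiplicative functions to $\{a_n\}_{n\ge 1}$ directly. For a real multiplicative function $f$ the usual pretentiousness obstruction against a complex twist $n^{it}$ with $t\ne 0$ is automatic: a real $f$ close to $n^{it}$ would also be close to the conjugate $n^{-it}$, forcing $t=0$. Consequently, for real $f$ with $|f(p)|$ bounded on primes, the Hal\'asz-type conclusion $\sum_{n\le x}f(n)=o(\sum_{n\le x}|f(n)|)$ reduces to the two non-pretentiousness conditions
\begin{equation}\label{eq:plan-conds}
\sum_p\frac{|f(p)|-f(p)}{p}=\infty \mand \sum_p\frac{|f(p)|+f(p)}{p}=\infty,
\end{equation}
ruling out that $f$ pretends to $+1$ or $-1$.

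Both conditions in \eqref{eq:plan-conds}, applied to $f=a_n$, follow at once from the Sato--Tate input in the definition of $\AST$. A direct evaluation gives the first moments
\[
\frac{2}{\pi}\int_0^\pi 2\cos\vartheta\,\sin^2\vartheta\,d\vartheta=0 \mand \frac{2}{\pi}\int_0^\pi|2\cos\vartheta|\,\sin^2\vartheta\,d\vartheta=\frac{8}{3\pi}.
\]
The Sato--Tate convergence, applied to the continuous test function $|2\cos\vartheta|$ and combined with partial summation and Mertens' theorem, then yields
\[
\sum_{p\le x}\frac{|a_p|}{p}=\frac{8}{3\pi}\log_2 x+o(\log_2 x)
\mand
\sum_{p\le x}\frac{a_p}{p}=o(\log_2 x).
\]
Both series in \eqref{eq:plan-conds} therefore equal $\tfrac{8}{3\pi}\log_2 x+o(\log_2 x)$ and diverge, as required. (Alternatively, for $t\ne 0$ one can verify the full complex non-pretentiousness condition $\sum_p(|a_p|-a_p\cos(t\log p))/p\to\infty$ directly via Cauchy--Schwarz together with the second Sato--Tate moment $\sum_p a_p^2/p\sim\log_2 x$ and the prime number theorem bound $\sum_p\cos(2t\log p)/p=O(1)$, using the numerical inequality $8/(3\pi)>1/\sqrt{2}$.)

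Applying the Hal\'asz-type theorem then yields $\sum_{n\le x}a_n=o(\sum_{n\le x}|a_n|)$, which is the claim. The main anticipated obstacle is ensuring that the chosen Hal\'asz-type result applies despite the possible unboundedness of $|a_{p^k}|$ for $k\ge 2$: the growth axiom $|a_{p^k}|\le p^{(k-1)/2-\varrho}$ in the definition of $\AST$ is tailored precisely to make the prime-power contributions to the Euler factors only an absolutely convergent multiplicative correction, so that one can effectively reduce to the squarefree range where $|a_p|\le 2$ and the function $a_n$ falls squarely in the scope of the Hal\'asz--Wirsing machinery. (A matching lower bound of the form $\sum_{n\le x}|a_n|\gg x(\log x)^{8/(3\pi)-1}$, which confirms that the conclusion is nontrivial, follows from Wirsing's mean value theorem applied to the nonnegative multiplicative function $|a_n|$.)
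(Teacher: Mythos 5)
Your approach is genuinely different from the paper's. The paper's proof is elementary: it works in the dyadic window $[x/2,x]$, discards smooth integers, integers with a large square factor, and integers whose two largest prime factors are close, writes the remaining $n$ as $Pm$ with $P=P(n)$ the largest prime factor, and then compares $S_m = a_m\sum_{P}a_P$ with $T_m = |a_m|\sum_P|a_P|$ directly, using only the Sato--Tate first moments $\int\cos\vartheta\sin^2\vartheta\,d\vartheta=0$ and $\int|\cos\vartheta|\sin^2\vartheta\,d\vartheta=1/3$; the lower bound $T\gg\pi(x)$ is obtained crudely by restricting to primes. Your route through Hal\'asz theory is more in the spirit of the classical analytic-number-theory machinery, and if fully carried out it would even give a quantitative saving of the type the paper's concluding remarks say would be desirable.

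That said, as written there are real gaps you would need to close. First, the claim that for real multiplicative $f$ the two conditions $\sum_p(|f(p)|\mp f(p))/p=\infty$ \emph{reduce} the $\ell^1$-comparison $\sum f(n)=o(\sum|f(n)|)$ is not a theorem you can quote; it is too coarse. Here $\sum_{n\le x}|a_n|$ is itself $x(\log x)^{8/(3\pi)-1+o(1)}$, a genuine power of $\log x$ below $x$, so you need the \emph{quantitative} Hal\'asz--Montgomery--Tenenbaum bound $\sum_{n\le x}a_n\ll x(1+M)e^{-M}+\ldots$ together with a Wirsing-type lower bound $\sum_{n\le x}|a_n|\gg x(\log x)^{8/(3\pi)-1+o(1)}$, and you must show $M\ge(\delta+o(1))\log_2 x$ with $\delta>1-8/(3\pi)$ \emph{uniformly} over the relevant range of $t$. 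Second, your parenthetical Cauchy--Schwarz computation is the real content, but the assertion $\sum_{p\le x}\cos(2t\log p)/p=O(1)$ is only valid for $t$ bounded away from $0$; as $|t|\to 1/\log x$ this sum approaches $\log_2 x$, which destroys the bound $\sigma\le 1/\sqrt2$. One can patch this by splitting the Cauchy--Schwarz at $p=\exp(1/|t|)$, using $\sum_{p\le e^{1/|t|}}a_p/p=o(\log_2 x)$ for the head, and combining with the small-$t$ continuity bound $|M_t-M_0|\ll\log(|t|\log x)$, but this is a nontrivial uniformity argument that must actually be carried out, since the available margin $8/(3\pi)-1/\sqrt2\approx 0.14$ is not generous. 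Third, Hal\'asz's theorem in its textbook form requires $|f(n)|\le1$, whereas here $|a_p|\le 2$ and $|a_{p^k}|$ can be as large as $p^{(k-1)/2-\varrho}$; one needs a version for functions bounded by a fixed divisor-type function, and you should cite one rather than wave at it. In exchange for these complications your route, once completed, would yield an explicit power-of-$\log$ saving in $\sum_{n\le x}a_n/\sum_{n\le x}|a_n|$; the paper's elementary decomposition gives only $o(1)$ but is robust and self-contained, and does not hinge on the numerical coincidence $8/(3\pi)>1/\sqrt2$.
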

 
As we have mentioned before for many interesting
sequences
$\{a_n\}_{n \ge 1} \in \AST$ our result is (conjecturally) optimal. Indeed, let us make the following
two additional assumptions:
\begin{itemize}
\item[(A1)] If $a_p \neq 0$ then $|a_{p^k}| \gg p^{-Ck}$ for some $C > 0$ and
all integer $k \ge 1$. 
\item[(A2)] We have, for any fixed $A > 0$,
$$\frac{
\# \{ p \leq x ~:~\vartheta_p \in [\alpha,\beta] \}}{\pi(x)} = \frac{2}{\pi} \int_{\alpha}^{\beta}
\sin^2 \vartheta d\vartheta + O\( (\log_2 x)^{-A} \).
$$
uniformly in $\pi \geq \beta \geq \alpha \geq 0$ as $x \to \infty$. 
\end{itemize}
In the case of $\tau(n)/n^{11/2}$, Assumption~A1 is known (see~\cite[Lemma~9]{Elliott2}), while  
Assumption~A2 follows from the automorphy
of $L(\text{Sym}^k f, s)$ for $f(n) = a_n$ and every $k=1,2, \ldots$, 
(see~\cite{Thorner2}). 
It would be interesting to determine whether the currently known potential automorphy is enough in order to deduce  Assumption~A2. 

For general coefficients of holomorphic
cusp forms one can prove Assumption~A1 for all primes
with at most finitely many exceptions by combining~\cite[Lemma~9]{RadziwillMatomaki} 
and~\cite[Lemma~2.2]{KowalskiRobert}. 
For coefficients of elliptic curves Assumption~A2 can be proven for fixed $\alpha,\beta$, on average 
for most elliptic curves (see~\cite{BaSh,Shp1,Shp2} for the currently strongest 
forms of this statement).

\begin{theorem}
\label{thm:CLT}
Assume that for $a_n \in \AST$  both Assumptions~A1 and~A2 hold. Let $\cN(x)=\{n\in [1,x]:a_n \neq 0\}$. Then, for fixed $v \in \mathbb{R}$,
$$
\lim_{x\to\infty} \frac{1}{\#\cN(x)} \cdot
\# \left \{ n \in \cN(x):
\frac{\log |a_n| + \frac{1}{2} \log_2 n}{\sqrt{c \log_2 n}} \geq v \right \}= \int_{v}^{\infty} e^{-u^2/2} \frac{du}{\sqrt{2\pi}}
$$
with 
$$
c = \frac 12 + \frac{\pi^2}{12}.
$$
\end{theorem}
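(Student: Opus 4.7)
The plan is to establish Theorem~\ref{thm:CLT} by an Erd\H{o}s--Kac type moment argument applied, for $n\in\cN(x)$, to the strongly additive function
\[
F(n)=\sum_{p\|n}\log|a_p|.
\]
First I would check that $\log|a_n|$ agrees with $F(n)$ up to $o(\sqrt{\log_2 n})$ for almost all $n\in\cN(x)$: by the growth-on-prime-powers hypothesis and Assumption~A1 one has $|\log|a_{p^k}||\ll k\log p$ whenever $a_{p^k}\ne 0$, so the contribution of exact prime-power divisors of exponent $\ge 2$ is at most $\log P(n)$ with $P(n)=\prod_{p^k\|n,k\ge 2}p^k$, and a routine sieve estimate shows this is $o(\sqrt{\log_2 x})$ on all but $o(\#\cN(x))$ integers.

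Next I would compute the target mean and variance from the Sato--Tate density $d\mu_{\mathrm{ST}}=(2/\pi)\sin^2\vartheta\,d\vartheta$. With $X=\log|2\cos\vartheta|$, the substitution $u=\cos\vartheta$ and the classical identities $\int_0^{\pi/2}\log\sin\varphi\,d\varphi=-\tfrac{\pi}{2}\log 2$ and $\int_0^{\pi/2}(\log\sin\varphi)^2\,d\varphi=\tfrac{\pi}{2}((\log 2)^2+\tfrac{\pi^2}{12})$ yield
\[
\int X\,d\mu_{\mathrm{ST}}=-\tfrac12\mand\int X^2\,d\mu_{\mathrm{ST}}=\tfrac12+\tfrac{\pi^2}{12}=c,
\]
and Assumption~A2 with partial summation gives
\[
\sum_{p\le x}\frac{\log|a_p|}{p}=-\tfrac12\log_2 x+o(\sqrt{\log_2 x}),\quad\sum_{p\le x}\frac{(\log|a_p|)^2}{p}=c\log_2 x+o(\log_2 x),
\]
exactly matching the centering $\mu_x:=-\tfrac12\log_2 x$ and scale $\sigma_x^2:=c\log_2 x$ in the theorem.

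I would then carry out the standard Erd\H{o}s--Kac moment method: expand
\[
\frac{1}{\#\cN(x)}\sum_{n\in\cN(x)}\left(\frac{F(n)-\mu_x}{\sigma_x}\right)^k
\]
as a sum over $k$-tuples of primes below $x$, noting that $\mathbf{1}[n\in\cN(x)]$ is multiplicative so the local densities of $n\in\cN(x)$ divisible by $p_1\cdots p_k$ factor. The pairing (diagonal) contributions, fed the second-moment asymptotic above, produce the $k$-th moment of $\cN(0,1)$, while off-diagonal terms are of smaller order thanks to the first-moment cancellation; convergence of all such moments then upgrades to convergence in distribution.

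The principal obstacle is the logarithmic singularity of $X_p$ at $\vartheta_p=\pi/2$, which makes $|X_p|$ unbounded. To deal with this I would truncate $X_p^{\le}:=X_p\mathbf{1}[|X_p|\le T]$ with $T$ tending to infinity slowly, say $T=(\log_3 x)^2$. Assumption~A1 ensures $|X_p|\le C\log p$ whenever $a_p\ne 0$, so the primes affected by truncation lie within $\ll e^{-T}$ of $\pi/2$, and it is exactly the uniform rate $(\log_2 x)^{-A}$ in~A2 that lets one verify both that $F$ agrees with its truncation to within $o(\sigma_x)$ for almost every $n\in\cN(x)$, and that every truncated moment still matches the Sato--Tate moment up to $o(\sigma_x^k)$. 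Without the uniform version of A2 the tail of the singularity could swamp the Gaussian main term in higher moments, which is precisely why mere qualitative Sato--Tate is not enough.
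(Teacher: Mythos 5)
Your proposal follows essentially the same route as the paper's proof: approximate $\log|a_n|$ by a strongly additive function built from $\log|a_p|$, compute the mean and variance from the Sato--Tate density (the paper records exactly the values $-\tfrac12$ and $\tfrac12+\tfrac{\pi^2}{12}$ you give), and run the Erd\H{o}s--Kac moment method (the paper invokes Granville--Soundararajan's Proposition~4 for this), using Assumption~A2 to control the primes with $|a_p|$ very small, i.e.\ $\vartheta_p$ near $\pi/2$. The only cosmetic difference is that you truncate the additive function at a threshold $T$, while the paper instead removes integers $n$ having a prime factor with $|a_p| \le (\log_2 x)^{-A}$ (its set $\cN_A(x)$) and shows this discards only $o(\#\cN(x))$ integers; both devices serve the same purpose, and your sketch correctly identifies why the uniform rate in A2 is the load-bearing hypothesis.
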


In the recent work,  Elliott and Kish~\cite{Kish} claim to be able to prove
Theorem~\ref{thm:CLT} under  Assumption~A2 but with a stronger rate of
convergence of the form $o((\log X)^{-3})$. It is important to reduce
the error term as much as possible in Assumption~A2. For example, our assumption (but not
that of Elliott and Kish~\cite{Kish}) allows us to
use the current conditional results towards Assumption~A2 assuming automorphy.
An unconditional effective Sato-Tate theorem would certainly have an
even weaker error term, since the currently known potential automorphy
is weaker than automorphy.

It is an interesting question whether one can establish 
unconditionally the upper bound part of Theorem~\ref{thm:CLT}. 
In the case of coefficients of half-integral weight cusp forms this has been very recently done 
unconditionally in~\cite{RadziwillSoundararajan}.

\section{Preparations}
 
\subsection{Preliminaries on Arithmetic Functions}

For the proof of Theorem~\ref{thm:AST} and Theorem~\ref{thm:AST aver}, we   need the following results. 
The first one is given by~\cite[Theorem~01]{HT} or~\cite[Lemma~9.6]{KoLu}.

\begin{lemma}
\label{lem:f}
Let $f$ be a multiplicative function such that 
 $f(n)\ge 0$ for every $n$. 
Assume that there exist positive constants $A$ and $B$ such that for $x> 1$ 
both inequalities 
$$
\sum_{p\le x} f(p)\log p\le Ax\quad {\text{ and}}\quad \sum_{p} \sum_{\alpha\ge 2} \frac{f(p^{\alpha})}{p^{\alpha}} \log (p^{\alpha})\le B
$$
hold. Then
$$
\sum_{n\le x} f(n)\le (A+B+1) \frac{x}{\log x} \sum_{n\le x} \frac{f(n)}{n}.
$$
\end{lemma}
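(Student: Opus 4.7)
I will start from the identity
\[ S(x) \log x = T(x) + R(x), \]
obtained by writing $\log x = \log n + \log(x/n)$, where $S(x) = \sum_{n \le x} f(n)$, $T(x) = \sum_{n \le x} f(n) \log n$, and $R(x) = \sum_{n \le x} f(n) \log(x/n)$. The elementary inequality $\log(x/n) \le x/n - 1$ applied termwise gives $R(x) \le x S'(x) - S(x)$, where $S'(x) = \sum_{n \le x} f(n)/n$, so that $(\log x + 1) S(x) \le T(x) + x S'(x)$. The task therefore reduces to estimating $T(x)$.

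For $T(x)$ I will use the additive identity $\log n = \sum_{p^\alpha \| n} \log p^\alpha$ together with the multiplicativity of $f$. Factoring $n = p^\alpha m$ with $\gcd(p,m)=1$, so that $f(n) = f(p^\alpha) f(m)$, and swapping the order of summation yields
\[ T(x) \le \sum_{p^\alpha \le x} f(p^\alpha) \log p^\alpha \cdot S(x/p^\alpha). \]
Applying the crude bound $S(y) \le y S'(y) \le y S'(x)$, which follows from $1/y \le 1/m$ for $m \le y$ and the monotonicity of $S'$, gives
\[ T(x) \le x S'(x) \sum_{p^\alpha \le x} \frac{f(p^\alpha) \log p^\alpha}{p^\alpha}. \]
The contribution from prime powers with $\alpha \ge 2$ is at most $B$ by hypothesis, while the prime contribution $\sum_{p \le x} f(p) \log p / p$ is handled via Abel summation against $\sum_{p \le t} f(p) \log p \le A t$.

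The subtle point and main obstacle is extracting the sharp constant $(A+B+1)$. A naive partial summation only yields $\sum_{p \le x} f(p) \log p/p \le A \log x + O(A)$, which on its own would give the weaker conclusion $S(x) \le (A + o(1)) x S'(x)$ rather than $S(x) \le (A+B+1) x S'(x)/\log x$. To recover the correct power of $\log x$, I will argue by induction on $x$: assuming the conclusion of the lemma for all $y < x$, I replace the crude bound $S(y) \le y S'(x)$ above by $S(y) \le (A+B+1)\, y\, S'(x)/\log y$. The weighted prime-power sum that now appears is $\sum_{p^\alpha \le x} f(p^\alpha) \log p^\alpha / (p^\alpha \log(x/p^\alpha))$, and the extra factor $\log(x/p^\alpha)$ in the denominator is precisely what is needed to absorb the $\log x$ loss from partial summation; indeed, after the substitution $u = \log(x/t)$ in the resulting Abel integral, the two logarithmic factors cancel cleanly. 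Handling the range $p^\alpha$ close to $x$ (where the induction hypothesis degenerates) by the trivial bound $S(x/p^\alpha) = O(1)$, and collecting the boundary terms, closes the induction with the advertised constant $(A+B+1)$.
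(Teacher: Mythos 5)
The paper itself does not prove Lemma~\ref{lem:f}; it is quoted from Hall--Tenenbaum \cite[Theorem~01]{HT} (or \cite[Lemma~9.6]{KoLu}), so you are supplying a proof rather than replicating one. Your reduction is sound up to
$$
T(x)=\sum_{n\le x}f(n)\log n \le \sum_{p^\alpha\le x} f(p^\alpha)\log(p^\alpha)\,S(x/p^\alpha),
$$
and you correctly observe that applying $S(y)\le y\,S'(x)$ and then partial summation loses a factor of $\log x$. But the inductive repair you sketch does not close the gap. With the inductive hypothesis $S(y)\le (A+B+1)\,y\,S'(x)/\log y$ inserted, the $\alpha=1$ piece is controlled by
$$
\sum_{p<x}\frac{f(p)\log p}{p\log(x/p)},
$$
and this sum is \emph{not} bounded: dyadically, for $p\in(x/2^{k+1},x/2^k]$ the hypothesis $\sum_{p\le t}f(p)\log p\le At$ gives $\sum f(p)\log p/p\ll A$, while $\log(x/p)\asymp k$, so the whole thing is a harmonic series in $k$ and grows like $A\log\log x$. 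Equivalently, your substitution $u=\log(x/t)$ turns $\int dt/(t\log(x/t))$ into $\int du/u=\log u$, which diverges; the ``clean cancellation'' you describe does not occur. The induction would therefore produce a constant deteriorating with $x$, not $(A+B+1)$.

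The missing idea, which is the heart of the Hall--Tenenbaum argument, is to \emph{not} bound $S(x/p)$ at all in the $\alpha=1$ term. Keep it as a sum over $m$ and swap the order of summation so that the Chebyshev-type hypothesis is applied at the moving scale $x/m$:
$$
\sum_{p\le x} f(p)\log p\,S(x/p)
= \sum_{m\le x} f(m)\sum_{p\le x/m} f(p)\log p
\le \sum_{m\le x} f(m)\cdot A\,\frac{x}{m} = A\,x\,S'(x),
$$
with no logarithmic loss. For $\alpha\ge2$ the crude bound $S(x/p^\alpha)\le (x/p^\alpha)S'(x)$ suffices and yields at most $B\,x\,S'(x)$. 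Combining with your estimate $R(x)\le xS'(x)-S(x)$ gives $S(x)\log x\le (A+B+1)\,x\,S'(x)$, which is the claim; no induction is needed.
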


We are now ready to prove some estimates that are of 
independent interest:

\begin{lemma}
\label{lem:3}
We have
\begin{itemize}
\item[(i)] 
$$
\sum_{n\le x} \frac{|a_n|}{n}\ll (\log x)^{0.85},
$$
\item[(ii)]
$$
\sum_{n\le x} |a_n|^2\le x(\log x)^{o(1)},
$$ 
\item[(ii)] for $\gamma < 1$, 
$$
\sum_{n\le x} |a_n|^{\gamma} \le x (\log x)^{- \gamma/2 + c \gamma^2}
$$
with some absolute constant  $c > 0 $, 
\end{itemize}
as $x\to\infty$. 
\end{lemma}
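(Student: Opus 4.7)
My plan is to treat all three parts uniformly by applying Lemma~\ref{lem:f} to the multiplicative function $f(n)=|a_n|^\gamma$, reducing each bound to controlling
$$
M_\gamma(x):=\sum_{n\le x}\frac{|a_n|^\gamma}{n}.
$$
By multiplicativity and nonnegativity, $M_\gamma(x)\le \prod_{p\le x}\bigl(1+\sum_{\alpha\ge 1}|a_{p^\alpha}|^\gamma/p^\alpha\bigr)$. I would take logarithms, use $\log(1+t)\le t$, and absorb the $\alpha\ge 2$ contributions into $O(1)$ via the growth hypothesis $|a_{p^\alpha}|\le p^{(\alpha-1)/2-\varrho}$, reducing to the prime sum $\sum_{p\le x}|a_p|^\gamma/p$. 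Since $\vartheta\mapsto|2\cos\vartheta|^\gamma$ is continuous and bounded on $[0,\pi]$, Sato--Tate equidistribution combined with partial summation delivers
$$
\sum_{p\le x}\frac{|a_p|^\gamma}{p}=\mu_\gamma\log_2 x+o(\log_2 x),\qquad \mu_\gamma:=\frac{2}{\pi}\int_0^\pi|2\cos\vartheta|^\gamma\sin^2\vartheta\,d\vartheta,
$$
hence $M_\gamma(x)\le(\log x)^{\mu_\gamma+o(1)}$.

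Part (i) is immediate from this with $\gamma=1$: a direct integration gives $\mu_1=\frac{4}{\pi}\int_0^\pi\sin^2\vartheta|\cos\vartheta|\,d\vartheta=8/(3\pi)<0.85$. For (ii) and (iii) I would invoke Lemma~\ref{lem:f}, whose hypotheses hold by the trivial bound $|a_p|\le 2$ (for the first) and by the growth condition on prime powers (for the second). This yields
$$
\sum_{n\le x}|a_n|^\gamma\ll \frac{x}{\log x}\,M_\gamma(x)\le x(\log x)^{\mu_\gamma-1+o(1)}.
$$
For (ii) with $\gamma=2$, the direct computation $\mu_2=\frac{8}{\pi}\int_0^\pi\sin^2\vartheta\cos^2\vartheta\,d\vartheta=1$ gives the claimed bound $x(\log x)^{o(1)}$.

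The substantive content is (iii), where one must establish $\mu_\gamma\le 1-\gamma/2+c\gamma^2$ for $\gamma\in(0,1)$ with an absolute $c$. I would obtain this by Taylor expansion of $\mu(\cdot)$ at $\gamma=0$. Clearly $\mu(0)=1$, and the second derivative
$$
\mu''(\gamma)=\frac{2}{\pi}\int_0^\pi|2\cos\vartheta|^\gamma(\log|2\cos\vartheta|)^2\sin^2\vartheta\,d\vartheta
$$
is bounded uniformly on $[0,1]$ because $|2\cos\vartheta|^\gamma$ is uniformly bounded there and $(\log|2\cos\vartheta|)^2$ has an integrable singularity at $\vartheta=\pi/2$. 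The decisive step is the exact evaluation
$$
\mu'(0)=\frac{2}{\pi}\int_0^\pi\log|2\cos\vartheta|\sin^2\vartheta\,d\vartheta=-\frac{1}{2},
$$
which I would obtain by substituting the Fourier expansion $\log|2\cos\vartheta|=\sum_{k\ge 1}(-1)^{k+1}\cos(2k\vartheta)/k$ together with $\sin^2\vartheta=(1-\cos 2\vartheta)/2$; orthogonality leaves only the $k=1$ contribution of $-\pi/4$. Taylor's theorem then delivers the required inequality on $\mu_\gamma$.

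The main obstacle is this exact computation $\mu'(0)=-1/2$: qualitative negativity (which Jensen yields at once from $\mu(2)=1$) is not enough, since the precise value $-\tfrac12$ is what propagates to the sharp $-1/2+o(1)$ exponent appearing in Theorem~\ref{thm:AST}. A secondary care point is to make the $o(1)$ in $M_\gamma(x)$ sufficiently uniform in $\gamma\in(0,1)$ so that it can be absorbed into the $c\gamma^2$ term; this should be routine because the family $\{|2\cos\vartheta|^\gamma\}_{\gamma\in(0,1)}$ has uniformly bounded total variation on $[0,\pi]$.
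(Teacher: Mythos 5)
Your proposal follows the paper's proof essentially verbatim: bound $\sum_{n\le x}|a_n|^\gamma/n$ by an Euler product, absorb the higher prime powers via the growth hypothesis, apply Sato--Tate with partial summation to obtain the exponent $h(\gamma)=\frac{2}{\pi}\int_0^\pi(2|\cos\vartheta|)^\gamma\sin^2\vartheta\,d\vartheta$, evaluate $h(1)=8/(3\pi)<0.85$ and $h(2)=1$, Taylor-expand $h$ at $0$ with $h'(0)=-1/2$, and then feed the resulting bounds into Lemma~\ref{lem:f} for parts (ii) and (iii). The only content you add beyond the paper is the explicit Fourier-series verification of $h'(0)=-1/2$, which the paper states without proof; your worry about uniformity in $\gamma$ is unnecessary, since in the application (the proof of Theorem~\ref{thm:AST}) the exponent $\gamma$ is fixed once $\varepsilon$ is.
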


\begin{proof}
To prove~(i), we note that by multiplicativity 
and by the bound on the values of $|a_n|$ at prime powers,  
we have for any fixed $\gamma\in [0,2]$, 
\begin{equation*}
\begin{split}
\sum_{n\le x}\frac{|a_n|^{\gamma}}{n} & \le  
 \prod_{p\le x} \(1+\frac{|a_p|^{\gamma}}{p}+\frac{|a_{p^2}|^{\gamma}}{p^{2}}+\cdots\)\\
& =   \prod_{p\le x} \(1+\frac{(2|\cos \vartheta_p|)^{\gamma}}{p}+O\(\frac{1}{p^{1+2\varrho}}\)\)\\
& =   \exp\(\sum_{p\le x} \frac{(2|\cos \vartheta_p|)^{\gamma}}{p} +O\(\sum_{p\ge 2} \frac{1}{p^{1+2\varrho}}\)\)\\
& =  
\exp\(\sum_{p\le x} \frac{(2|\cos \vartheta_p|)^{\gamma}}{p} +O\(1\)\).
\end{split}
\end{equation*}
Recalling  the Sato--Tate distribution property of the sequence $\{a_n\}_{n \ge 1}$, 
and the Mertens formula (see~\cite[Chapter~I.1, Theorem~9]{Ten}), via partial summation 
we obtain
$$
\sum_{p\le x} \frac{(2|\cos \vartheta_p|)^{\gamma}}{p}=\(\frac{2}{\pi}\int_{0}^{\pi} (2|\cos \vartheta|)^{\gamma} \sin^2 \vartheta d\vartheta+o(1)\)\log_2 x
$$
as $x\to\infty$. Let
$$
h(\gamma) = \frac{2}{\pi}\int_{0}^{\pi} (2|\cos \vartheta|)^{\gamma} \sin^2 \vartheta d\vartheta.
$$
The above integral evaluates to $0.848826 \ldots$ when $\gamma = 1$; hence, we obtain (i). 
The integral also evaluates to $1$ when $\gamma = 2$. Furthermore for small $\gamma < 1$
since $h'(0) = - 1/2$,
$$
h(\gamma) \leq  1 - \frac{\gamma}{2} + O(\gamma^2). 
$$
It follows that,
\begin{equation}
\begin{split}
\label{eq:13}
\sum_{n \le x} \frac{|a_n|^2}{n}   \le (\log x)^{1 + o(1)} \mand 
\sum_{n \le x} \frac{|a_n|^{\gamma}}{n}   \le (\log x)^{1 - \frac{\gamma}{2} + c \gamma^2}
\end{split}
\end{equation}
with $c > 0$ an absolute constant. 

Noting that the functions $f(n)=|a_n|^2\le 1$ and $f(n)=|a_n|^{\gamma}$ satisfy the conditions of Lemma~\ref{lem:f} 
with some appropriate constants $A$ and $B$, we derive~(ii) and~(iii) from the inequalities~\eqref{eq:13}.
\end{proof}

\section{Proof of Theorem~\ref{thm:AST}}

Let $\varepsilon > 0$ be fixed.
Let $\cN_{\varepsilon}(x)=\#\{n \leq x:|a_n| > (\log n)^{-1/2 + \varepsilon}\}$.
By Lemma~\ref{lem:3}~(iii),
$$
\cN_{\varepsilon}(x) \leq \sum_{n \leq x} |a_n|^{\gamma} \cdot (\log n)^{\gamma/2 - \gamma \varepsilon}
\ll x (\log x)^{-\gamma \varepsilon + c \gamma^2},
$$ 
where $c > 1$ is an absolute constant. Choosing $\gamma = \varepsilon / (2c)$, we get
$$
\cN_{\varepsilon}(x) \leq x (\log x)^{-\varepsilon^2 /2} = o(x)\qquad (x\to\infty).
$$
Hence, for any fixed $\varepsilon > 0$, we have $|a_n| < (\log n)^{-1/2 + \varepsilon}$ for almost
all $n$. 

\section{Proof of Theorem~\ref{thm:AST aver}}

\subsection{Sums $S$ and $T$ and negligible sets}

We let $x$ be  sufficiently large and define
$$
S=\sum_{x/2< n\le x} a_n\quad {\text{\rm and}}\quad T=\sum_{x/2<n\le x} |a_n|.
$$
It suffices to show that $S=o(T)$ as $x\to\infty$. Note first that by considering only those $n$ which are primes $p$ such that $|\cos \vartheta_p|\ge 1/2$, a set 
of density $(4/\pi)\int_0^{\pi/3} \sin^2 \vartheta d\vartheta>0.39$, we already get that
\begin{equation}
\label{eq:T}
T\ge \sum_{\substack{ x/2<p\le x\\ |\cos \vartheta_p|\ge 1/2}} 2|\cos \vartheta_p|\gg \pi(x).
\end{equation}
Suppose next that $\cN\subseteq [x/2,x]$ is such that $\#\cN=O(x/(\log x)^{3})$. Then, by
the Cauchy-Schwarz inequality  and  Lemma~\ref{lem:3}~(ii), we have,
$$
|\sum_{n\in \cN} a_n|\le \sum_{n\in \cN} |a_n|\le \(\# \cN\)^{1/2} \(\sum_{n\in \cN} |a_n|^2\)^{1/2}\le 
\frac{x}{(\log x)^{1.5+o(1)}}=o(T)
$$
as $x\to\infty$. Hence, we may neglect the contribution to either 
$S$ or $T$ coming from  any subsets $\cN$ of $[x/2,x]$ of cardinality of order at most 
$x/(\log x)^3$.

\subsection{Discarding contributions from exceptional integers}

Let 
$$
y=\exp\(\frac{4\log x\log_3 x}{\log_2 x}\).
$$
For a positive integer $m$ put $P(m)$ for the largest prime factor of $m$ with $P(1)=1$. Let
$$
\cN_1(x)=\{x/2<n\le x~:~P(n)\le y\}.
$$
{From} the theory of smooth numbers~\cite{CEP}, we know that in our range for $y$ versus $x$, 
$$
\# \cN_1(x)\ll x\exp(-(1+o(1)) u\log u),\quad {\text{\rm where}}\quad u=\frac{\log x}{\log y}\qquad (x\to\infty).
$$
Since $u=4\log_2 x/(\log_3 x)$, it follows that $u\log u=(4+o(1))\log_2 x$ as $x\to\infty$, and therefore
$$
\#\cN_1(x) \le \frac{x}{(\log x)^{3}}.
$$
{From} now on, we discard the positive integers $n\in \cN_1(x)$. Next let 
$$
\cN_2(x) =\{n\in [x/2,x]~:~p^2\mid n~{\text{\rm for~some}}~p>y/2\}.
$$
Fixing $p$, the number of $n\in [x/2,x]$ which are multiples of $p^2$ is at most $\lfloor x/(2p^2)\rfloor +1$. Thus,
$$
\#\cN_2(x)\le \sum_{y/2\le p\le x^{1/2}} \(\fl{x/(2p^2)} +1\)\ll x\sum_{y/2\le p} \frac{1}{p^2}+\pi({\sqrt{x}})\ll \frac{x}{y}
\ll \frac{x}{(\log x)^3}.
$$
{From} now on, we also discard the positive integers  $n\in \cN_2(x)$.

Consider $n\in [x/2,x]\backslash \(\cN_1(x)\cup \cN_2(x)\)$. 

We write $n=P(n)m$. Then $P(n)>\max\{y,P(m)\}$ is prime and $P(n)\in [x/(2m),x/m]$. Further, any prime 
$P$ in the interval $[x/(2m),x/m]$ can serve the role of $P=P(n)$ for $n=Pm$, except when it is the case that $P(m)\in [x/(2m),x/m]$. Let 
$\cN_3(x)$ be the set of $n$ of the form $n=P(n)m$, where $ P(n)>P(m)$ and $P(m)\in [x/(2m),x/m]$. Then $P(m)>x/(2m)\ge P(n)/2>y/2$, so 
that $P(m)\| m$ (that is, $P(m)^2 \nmid m$). Write $Q=P(m)$ and $m=Q\ell$. 
Then $n=PQ\ell$, $P>Q>P/2$ and $P(\ell)<Q$. Further, 
$$
\frac{x}{2\ell}<PQ<\frac{x}{\ell},\quad {\text{\rm therefore}} \quad \(\frac{x}{2\ell}\)^{1/2}<P<2Q<2\(\frac{x}{\ell}\)^{1/2}.
$$
Let $\cL(x)$ be the set of all possible values $\ell$  and for a fixed $\ell\in\cL(x)$,  
let $\cN_{3,\ell}(x)$ be the subset of $n = PQ\ell$  from $\cN_3(x)$ with the corresponding $\ell$.
Note that $P$, $Q$ and $\ell$ are pairwise relatively prime.
Then
\begin{equation*}
\begin{split}
\left|\sum_{n\in \cN_{3,\ell}(x)} a_n\right| & \le  
 |a_{\ell}|
\sum_{P \in \({\sqrt{x/(2\ell)}}, 2{\sqrt{x/\ell}}\)} 
\sum_{Q \in \({\sqrt{x/(8\ell)}}, {\sqrt{x/\ell}}\)} 
|a_P| |a_Q|\\
& \ll   |a_{\ell}|\pi\(2 \(\frac{x}{\ell}\)^{1/2}\)^2\ll   \frac{x|a_{\ell}|}{\ell(\log(x/\ell))^2} \ll   \frac{x}{(\log y)^2} \frac{|a_{\ell}|}{\ell}.
\end{split}
\end{equation*}
Summing up over all the possible values of $\ell$ and invoking Lemma~\ref{lem:3}~(i) and estimate~\eqref{eq:T}, we get that
\begin{equation*}
\begin{split}
\left|\sum_{n\in \cN_3(x)} a_n\right| & \le    \sum_{\ell\in\cN(x)} \sum_{n\in \cN_{3,\ell}(x)(x)} |a_n|  \ll   \frac{x}{(\log y)^2} \sum_{\ell\le x} \frac{|a_{\ell}|}{\ell}\\
& \ll  \frac{x(\log x)^{0.85} (\log_2 x)^2}{(\log x)^2 (\log_3 x)^2}\ll \frac{x}{(\log x)^{1.1}}=o(T)
\end{split}
\end{equation*}
as $x\to\infty$.

\subsection{Contributions to $S$ and $T$ from typical integers}

Define 
$$
\cN(x) \in  [x/2,x]\backslash \(\cN_1(x)\cup \cN_2(x)\cup \cN_3(x)\).
$$
For each such $n \in \cN(x)$, we write as before $n=Pm$
and let $\cM(x)$ be the set of all possible values of $m$. Fix $m\in \cM(x)$. We compare
$$
S_m=\sum_{\substack{n \in \cN(x)\\n = P(n)m}} a_n=a_m\sum_{P\in [x/2m,x/m]} a_P
$$
and 
$$
T_m=\sum_{\substack{n \in \cN(x)\\n = P(n)m}}
|a_n|=|a_m|\sum_{P\in [x/(2m),x/m]} |a_P|.
$$
Note that
$$
 \int_0^{\pi} \cos \vartheta \sin^2 \vartheta d\vartheta = 0
 \mand
 \int_0^{\pi} |\cos \vartheta| \sin^2 \vartheta d\vartheta= \frac{1}{3}.
 $$
 Clearly, since $\{a_n\}_{n \ge 1} \in \AST$, we have
\begin{equation}
\begin{split}
\label{eq:S1}
\sum_{P\in [x/(2m),x/m]}& a_P  =  2\sum_{P\in [x/(2m),x/m]} \cos \vartheta_P\\
&=\(\frac{4}{\pi} \int_0^{\pi} \cos \vartheta \sin^2 \vartheta d\vartheta+o(1)\)\(\pi(x/m)-\pi(x/2m)\)\\
&=o(\pi(x/m)),
\end{split}
\end{equation}
whereas
\begin{equation}
\begin{split}
\label{eq:T1}
\sum_{P\in [x/(2m),x/m]}& |a_P|   =  2\sum_{P\in [x/(2m),x/m]} |\cos \vartheta_P|\\
&=\(\frac{4}{\pi} \int_0^{\pi} |\cos \vartheta| \sin^2 \vartheta d\vartheta+o(1)\) \(\pi(x/m)-\pi(x/2m)\)\\
& \gg  \pi(x/m).
\end{split}
\end{equation}
We see from the definition of $S_m$ and $T_m$ that if
 $a_m = 0$ then $S_m = T_m = 0$.
Comparing~\eqref{eq:S1} and~\eqref{eq:T1}, we obtain that
if $a_m \ne 0$ then $S_m = o(T_m)$. Therefore,
\begin{equation*}
\begin{split}
S & =   \sum_{n\in [x/2,x]} a_n=\sum_{m\in \cM(x)} S_m+o(T) 
=\sum_{\substack{m\in \cM(x)\\ a_m \ne 0}} S_m+o(T)\\
& = \sum_{\substack{m\in \cM(x)\\ a_m \ne 0}}  o\(T_m\)+o(T) = \sum_{m\in \cM(x)} o\(T_m\)+o(T)\\
& =   o\(\sum_{m\in \cM(x)}T_m\)+o(T)= o(T)\qquad (x\to\infty),
\end{split}
\end{equation*}
as desired.

\section{Proof of Theorem~\ref{thm:CLT}}

\subsection{The sets $\cN(x)$ and $\cA(x)$} 

As in the statement of Theorem~\ref{thm:CLT}, let $\cN(x)=\{n\le x~:~a_n\ne 0\}$. 

We recall that all implied constants may depend on the parameter $A$.

\begin{lemma}
\label{eq:sum recip}
For a fixed $A > 1$, and $y\ge 2$, then
$$
\sum_{\substack{p\ge y\\ |a_p| < (\log_2  p)^{-A}}} \frac{1}{p} \ll \frac{1}{(\log_2 y)^{A-1}}.
$$
\end{lemma}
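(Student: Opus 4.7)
Writing $a_p = 2\cos\vartheta_p$, the condition $|a_p| < (\log_2 p)^{-A}$ places $\vartheta_p$ in an arc of length $O((\log_2 p)^{-A})$ centered at $\pi/2$. I will estimate the sum by splitting into dyadic blocks $p \in (2^k, 2^{k+1}]$, counting eligible primes on each block via Assumption~A2, and summing the resulting bounds over $k$.

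\textbf{Dyadic count.}
Fix $k$ with $2^k \ge y/2$. On $p \in (2^k, 2^{k+1}]$ one has $\log_2 p = \log(k \log 2) + O(1/k) \asymp \log k$, so the condition forces $\vartheta_p \in J_k$ for a single interval $J_k$ of length $\ll (\log k)^{-A}$ around $\pi/2$ (take $J_k$ corresponding to the smallest value of $\log_2 p$ in the block). Apply Assumption~A2 with an error exponent $B > A$ (A2 allows any $B > 0$) at both $x = 2^{k+1}$ and $x = 2^k$, subtract, and use $\frac{2}{\pi}\int_{J_k} \sin^2\vartheta\, d\vartheta \le |J_k|$ to obtain
\begin{equation*}
\#\{p \in (2^k, 2^{k+1}] : \vartheta_p \in J_k\} \ll \frac{2^k}{k}\bigl(|J_k| + (\log k)^{-B}\bigr) \ll \frac{2^k}{k(\log k)^A}.
\end{equation*}

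\textbf{Summing over blocks.}
Since $1/p \asymp 2^{-k}$ on the block, its contribution to the sum is $\ll 1/(k(\log k)^A)$. Writing $k_0 := \lceil \log y / \log 2 \rceil \asymp \log y$ and using $A > 1$, the integral test gives
\begin{equation*}
\sum_{\substack{p \ge y \\ |a_p| < (\log_2 p)^{-A}}} \frac{1}{p} \ll O(1) + \sum_{k \ge k_0} \frac{1}{k(\log k)^A} \ll \int_{k_0}^\infty \frac{dt}{t(\log t)^A} \ll \frac{1}{(\log k_0)^{A-1}} \ll \frac{1}{(\log_2 y)^{A-1}},
\end{equation*}
since $\log k_0 \asymp \log\log y = \log_2 y$. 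The $O(1)$ absorbs the finitely many primes $p \le e^e$ for which $\log_2 p = 1$ and the bound is trivial.

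\textbf{Main obstacle.}
There is no serious difficulty once the decomposition is in place; the only delicate point is ensuring the error from A2 does not swamp the (small) Sato--Tate mass of $J_k$ after subtracting the counts at $2^{k+1}$ and $2^k$. This is handled by choosing $B > A$, which A2 permits for any fixed~$B$. The exponent $A-1$ in the target bound arises naturally from the integral $\int dt/(t(\log t)^A)$, which is exactly why the hypothesis $A > 1$ is needed.
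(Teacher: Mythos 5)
Your proof is correct and follows essentially the same route as the paper's: decompose the sum into (quasi-)dyadic blocks, count the eligible primes in each block via Assumption~A2 after converting the condition $|a_p|<(\log_2 p)^{-A}$ into an arc condition on $\vartheta_p$, and then sum $\sum 1/(k(\log k)^A)$ to obtain the $(\log_2 y)^{1-A}$ bound. The only cosmetic differences are that the paper splits at powers of $e$ rather than $2$ and bounds the block count by the full count up to $e^{k+1}$ instead of subtracting, and you are slightly more explicit than the paper about the need to invoke A2 with a larger exponent $B>A$ so that the A2 error term does not dominate the Sato--Tate mass of the arc.
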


\begin{proof}
We may assume that $y\ge 10$. Let $k_y\ge 3$ be the minimal positive integer such that $e^{k_y}\ge y$. Splitting the summation range into intervals of the form 
$[e^k, e^{k + 1}]$ for integers $k\ge k_y-1$, and using Assumption~A2, we get
\begin{equation*}
\begin{split}
\sum_{\substack{p\ge y\\ |a_p| < (\log_2 p)^{-A}}} \frac{1}{p}
& \le   \sum_{k\ge k_y-1}^\infty \sum_{\substack{e^k < p \leq e^{k + 1} \\
|a_p| < (\log k)^{-A}}} \frac{1}{p}  \ll   \sum_{k\ge k_y-1}^\infty  \frac{1}{e^k} \sum_{\substack{p < e^{k + 1} \\
|a_p| < (\log k)^{-A}}} 1 \\ 
&   \ll  \sum_{k\ge k_y-1} \frac{1}{e^k} \cdot \frac{e^{k+1}}{k (\log k)^{A}} \ll  \sum_{k\ge k_y-1} \frac{1}{k(\log k)^A}\\
 & \ll   \frac{1}{(\log k_y)^{A-1}}\ll \frac{1}{(\log_2 y)^{A-1}},
\end{split}
\end{equation*}
as claimed. 
\end{proof}

Put
$$
\kappa=\prod_{\substack{p\ge 2\\ a_p=0}} \(1-\frac{1}{p}\).
$$
Lemma~\ref{eq:sum recip} with any $A>1$ shows that $\kappa>0$. 

\begin{lemma}
\label{eq:card Nx}
For a fixed $A>1$, we have 
$$
\# \cN(x) = \kappa x \(1+O\(\frac{1}{(\log_2 x)^{A-1}}\)\).
$$
\end{lemma}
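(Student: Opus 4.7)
The plan is to count $\cN(x)$ by sieving with respect to the set $\cP_0:=\{p:a_p=0\}$, using Lemma~\ref{eq:sum recip} to control both the tail of the sieve and the discrepancy between $\kappa$ and its finite truncations.

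The first step is a structural reduction. By multiplicativity and Assumption~A1, $a_n\ne0$ whenever $\gcd(n,Q)=1$, where $Q=\prod_{p\in\cP_0}p$ is treated formally: each prime-power divisor $p^k\|n$ then satisfies $|a_{p^k}|\gg p^{-Ck}>0$, so $a_n\ne 0$. Conversely, if $p\in\cP_0$ and $p\|n$ exactly, then $a_n=a_p\cdot a_{n/p}=0$. Hence $\cN(x)$ coincides with $\{n\le x:\gcd(n,Q)=1\}$ apart from those $n\le x$ divisible by $p^2$ for some $p\in\cP_0$, an exceptional set that must be shown to have cardinality $O(x(\log_2 x)^{-(A-1)})$.

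The main computation is a sieve on $\cP_0$. Split $\cP_0$ at a threshold $z$ to be optimised. The count of $n\le x$ divisible by some $p\in\cP_0\cap[z,\infty)$ is, by a union bound and Lemma~\ref{eq:sum recip}, at most $x\sum_{p\in\cP_0,\,p\ge z}1/p\ll x(\log_2 z)^{-(A-1)}$. For primes in $\cP_0\cap[2,z)$ I would apply Brun's fundamental lemma (a zero-dimensional sieve, as $\cP_0$ is sparse among all primes), obtaining $\#\{n\le x:p\nmid n\text{ for all }p\in\cP_0\cap[2,z)\}=xV(z)(1+O(e^{-s}))+O(z^s)$ with $V(z):=\prod_{p\in\cP_0,\,p<z}(1-1/p)$. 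Lemma~\ref{eq:sum recip} also gives $V(z)=\kappa(1+O((\log_2 z)^{-(A-1)}))$. A naive M\"obius inversion would force $z\lesssim\log x$ and yield only a $(\log_3 x)^{-(A-1)}$ error; to reach $(\log_2 x)^{-(A-1)}$ I would take e.g.\ $z=x^{1/L}$ with $L=\lfloor\sqrt{\log x}\rfloor$ (so $\log_2 z=\log_2 x+O(1)$) and truncation depth $s=\lceil(A-1)\log_3 x\rceil$ (so $e^{-s}\asymp(\log_2 x)^{-(A-1)}$ while $z^s=x^{o(1)}$ is negligible). Combining these gives $\#\{n\le x:\gcd(n,Q)=1\}=\kappa x(1+O((\log_2 x)^{-(A-1)}))$.

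The main obstacle is the structural reduction: showing that $\#\{n\le x:\exists p\in\cP_0,\,p^2\mid n\}=O(x(\log_2 x)^{-(A-1)})$. The naive bound $x\sum_{p\in\cP_0}1/p^2$ is only $O(x)$ when $\cP_0$ contains small primes. A finer argument would specialise Assumption~A2 to shrinking intervals around $\vartheta=\pi/2$, where the Sato--Tate density vanishes, to obtain $\pi_0(t):=\#\{p\le t:a_p=0\}\ll t/(\log t(\log_2 t)^A)$; partial summation then gives a strong tail bound for $\sum_{p\in\cP_0,\,p\ge y}1/p^2$, with a separate ad hoc estimate for the few small primes in $\cP_0$. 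Alternatively, if the sequence satisfies $a_p=0\Rightarrow a_{p^k}=0$ for all $k\ge 1$ (as often arises in applications), the reduction is immediate.
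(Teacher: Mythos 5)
Your structural reduction and the gap you flagged are genuine. The paper's own proof silently replaces $\cN(x)=\{n\le x:a_n\ne0\}$ with $\{n\le x:\gcd(n,Q)=1\}$: it defines the strongly multiplicative $f$ with $f(p)=1$ when $a_p\ne0$ and $f(p)=0$ otherwise, and then asserts $\#\cN(x)=\sum_{n\le x}f(n)$ without comment. That identity requires the extra implication $a_p=0\Rightarrow a_{p^k}=0$ for all $k\ge 2$, which is not among the axioms of $\AST$ nor contained in A1 (which only speaks to primes with $a_p\ne0$). One can construct a sequence in $\AST$ satisfying A1 and A2 with, say, $a_2=0$ but $a_{2^k}\ne0$ for $k\ge2$; then $\cN(x)$ has density strictly larger than $\kappa$ and the lemma as literally stated fails. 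Worse, the needed implication actually fails for normalised Hecke eigenvalues: the Hecke relation forces $\lambda(p^2)=-1\ne 0$ whenever $\lambda(p)=0$. What saves the intended applications is only that there $\cP_0$ contains no small primes, so $\sum_{p\in\cP_0}1/p^2$ is negligible. The clean repair is to redefine $\cN(x)$ as $\{n\le x: p\mid n\Rightarrow a_p\ne 0\}$, which is what the paper actually estimates and what the later lemmas really use.

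Your proposed repair of the exceptional set via A2 and partial summation cannot close the gap, as you half-acknowledge: the tail $\sum_{p\in\cP_0,\,p\ge y}1/p^2\ll 1/(y\log y)$ is already negligible without A2, and the obstruction lies entirely with small primes in $\cP_0$, about which A2 says nothing. (A wording nit: the Sato--Tate density $\tfrac2\pi\sin^2\vartheta$ does \emph{not} vanish at $\vartheta=\pi/2$; it equals $\tfrac2\pi$ there and vanishes at $0$ and $\pi$. What you need is that the measure of a shrinking interval about $\pi/2$ tends to $0$; your bound $\pi_0(t)\ll t/(\log t\,(\log_2 t)^A)$ is still correct.)

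For the main count of $\{n\le x:\gcd(n,Q)=1\}$, your Brun's-sieve route with $z=x^{1/\sqrt{\log x}}$ and truncation $s\asymp\log_3 x$ is a genuinely different argument from the paper's and gives the same exponent $(\log_2 x)^{-(A-1)}$. The paper avoids sieve machinery entirely: it writes $f=1*g$ with $g(p)=-1$ for $p\in\cP_0$, $g(p)=0$ otherwise, and $g(p^\alpha)=0$ for $\alpha\ge2$; expands $\sum_{n\le x}f(n)=\sum_{d\le x}g(d)\bigl(x/d+O(1)\bigr)$; controls $\sum_{d\le x}|g(d)|\ll x/\log x$ via Lemma~\ref{lem:f}; and bounds $\sum_{d>x}|g(d)|/d$ by splitting at $\exp((\log x)^{1/3})$, using smooth-number estimates below the split and Lemma~\ref{eq:sum recip} above it. That approach stays within the tools already established in the paper, whereas yours imports the fundamental lemma of sieve theory; both are sound for the part of the statement that is actually provable.
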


\begin{proof}
Let $f$ be a multiplicative function such that $f(p^{\alpha}) = f(p) = 1$
if $a_p\neq 0$ and $f(p) = 0$ otherwise. 
Write
$$
f(n) = \sum_{d\mid n} g(d)
$$
with $g$ a multiplicative function such that $g(p^{\alpha}) = f(p) - 1$ if $\alpha=1$ and $p\le x$
and $g(p^{\alpha}) = 0$ otherwise.  Then,
\begin{equation} \label{equeasy}
\# \cN(x) = \sum_{n \leq x} f(n) = \sum_{n \leq x} \sum_{d\mid n} g(d)
= \sum_{d \leq x} g(d) \( \frac{x}{d} + O(1) \).
\end{equation}
By Lemma~\ref{lem:f}, 
$$
\sum_{n \leq x} |g(n)| \ll \frac{x}{\log x} 
\prod_{\substack{p \le x\\ p \not\in  \cN(x)}}  \(1 + \frac{1}{p} \).
$$
Since $p\not\in \cN(x)$ implies, in particular, that $|a_p|<(\log_2  p)^{-2}$, we have 
$$
\sum_{\substack{p \le x\\ p \notin  \cN(x)}}\frac{1}{p}  = O(1)
$$ 
by Lemma~\ref{eq:sum recip} with $A=2$ and $y=2$. Hence,
$$
\prod_{\substack{p\le x\\ p\not\in \cN(x)}} \(1+\frac{1}{p}\)=O(1).
$$
Furthermore, observe that
$$
\sum_{d > x} \frac{|g(d)|}{d} = \sum_{\substack{d>x\\ p\mid d \Rightarrow p\not\in \cN(x)}} \frac{\mu^2(d)}{d}. 
$$
To evaluate the last sum above, we split it at $y=\exp((\log x)^{1/3})$. In the lower range, the sum is bounded by 
$$
S_1=\sum_{\substack{d>x\\ P(d)<y}} \frac{\mu(d)^2}{d}.
$$
Putting $u=\log x/\log y=(\log x)^{2/3}$, we have, by known estimates concerning smooth numbers, 
$$
S_1\ll \frac{1}{\exp(u)} \sum_{P(n)\le y} \frac{1}{n}\ll \frac{\log y}{e^{u}}\ll \frac{1}{e^{u/2}}\ll \frac{1}{(\log_2 x)^{A-1}}.
$$
In the upper range, we have $P(d)\ge y$, so writing $d=pm$, where $p=P(d)$, we get that the sum in this range is bounded by
\begin{equation*}
\begin{split}
S_2 & =  \sum_{\substack{p\not\in {\mathcal N}(x)\\ p>y}} \frac{1}{p} \sum_{\substack{m\\ q\mid m\Rightarrow q\not\in \cN(x)}} \frac{1}{m}
 \ll  \prod_{q\not\in \cN(x)} \(1+\frac{1}{q}\) 
\sum_{\substack{p\not\in {\mathcal N}(x)\\ p>y}} \frac{1}{p} \\
&  \ll  \frac{1}{(\log_2 y)^{A-1}}\ll \frac{1}{(\log_2 x)^{A-1}},
\end{split}
\end{equation*}
by Lemma~\ref{eq:sum recip}. Hence, the equation~\eqref{equeasy} becomes
$$
\# \cN(x) = \prod_{p\le x} \(1 + \frac{g(p)}{p} \) x
\(1+O\(\frac{1}{(\log_2 x)^{A-1}}\)\).
$$
It remains to show that
$$
\kappa=\prod_{p\le x} \(1+\frac{g(p)}{p}\) 
\(1+O\(\frac{1}{(\log_2 x)^{A-1}}\)\).
$$
However this is equivalent to 
$$
\prod_{\substack{p>x\\ a_p=0}} \(1-\frac{1}{p}\)
=1+O\(\frac{1}{(\log_2 x)^{A-1}}\),
$$
and this follows immediately from Lemma~\ref{eq:sum recip} for any $A>1$ and $y=x$. 
\end{proof}

Note that by Assumption~A2, for every $A > 0$, we have
$$
\frac{1}{\pi(x)} \cdot \# \left \{p \leq x~:~|a_p| < (\log_2  x)^{-A} \right \}
\ll (\log_2  x)^{-A}.
$$
Instead of working on the set $ \cN(x)$ we  work on the more 
convenient set 
$$
\cN_A(x) = \left \{n \leq  \cN(x)~:~p\mid n \implies |a_p| > (\log_2  x)^{-A} \right \}.
$$
The following result justifies this change.

\begin{lemma}
\label{eq:Set A}
For a fixed $A> 1$, 
uniformly for sets $\cA\subset \R$,  we have
\begin{equation*}
\begin{split}
\frac{\# \left \{ n \in  \cN(x)~:~\log |a_n| \in \cA \right \} }{\# \cN(x)} & =  \frac{ \# \left \{ n \in  \cN_A(x):
\log |a_n| \in \cA \right \}}{\# \cN_A(x)} \\
& +  O\(\frac{1}{(\log_4 x)^{A-1}}\)
\end{split}
\end{equation*}
as $x\to\infty$.
\end{lemma}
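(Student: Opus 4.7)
The plan is to bound the symmetric difference $D := \#\bigl(\cN(x) \setminus \cN_A(x)\bigr)$ and then deduce the ratio estimate by an elementary manipulation. First I would verify that $\cN_A(x) \subseteq \cN(x)$: if every prime divisor $p$ of $n$ satisfies $|a_p| > (\log_2 x)^{-A}$, then $a_p \neq 0$, and Assumption~A1 yields $|a_{p^k}| \gg p^{-Ck} > 0$ for every $k \geq 1$; multiplicativity then forces $a_n \neq 0$, so that $n \in \cN(x)$. With this containment in hand, write $N = \#\cN(x)$, $N_A = \#\cN_A(x)$, and let $M, M_A$ denote the corresponding counts restricted to $\log|a_n| \in \cA$. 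Since $0 \leq M - M_A \leq D = N - N_A$, a routine manipulation gives
$$
\left|\frac{M}{N}-\frac{M_A}{N_A}\right| \;=\; \left|\frac{(M-M_A)N_A + M_A(N_A-N)}{N\,N_A}\right| \;\le\; \frac{2D}{N},
$$
and by Lemma~\ref{eq:card Nx} one has $N \gg x$. It thus suffices to show $D \ll x/(\log_4 x)^{A-1}$.

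To estimate $D$, each $n \in \cN(x)\setminus \cN_A(x)$ admits at least one prime divisor $p$ with $|a_p| \le (\log_2 x)^{-A}$, so a union bound yields
$$
D \;\le\; \sum_{\substack{p \le x \\ |a_p| \le (\log_2 x)^{-A}}} \#\{n \le x : p \mid n\} \;\le\; x\!\!\sum_{\substack{p \le x \\ |a_p| \le (\log_2 x)^{-A}}}\!\!\frac{1}{p}.
$$
I would split this sum at $y := \log_2 x$. For $p \geq y$ one has $(\log_2 x)^{-A} \le (\log_2 p)^{-A}$ since $p \leq x$, so Lemma~\ref{eq:sum recip} applied at the scale $y$ immediately yields a contribution of order $1/(\log_2 y)^{A-1} = 1/(\log_4 x)^{A-1}$, exactly matching the target rate.

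The principal difficulty lies in the range $p < y$. Here I would appeal to the quantitative Sato--Tate bound $\#\{p \le t : |a_p| \le \varepsilon\} \ll \pi(t)\bigl(\varepsilon + (\log_2 t)^{-A}\bigr)$ furnished by Assumption~A2, and estimate the sum by partial summation. The subtlety is that this bound is only nontrivial once $\log_2 t$ is not too small, so one must calibrate a secondary cutoff within the small-prime range and exploit the fact that for any individual prime $p$ the inequality $|a_p| \le (\log_2 x)^{-A}$ fails for all sufficiently large $x$. Carrying this out contributes $O\bigl((\log_2 x)^{-A}\log_3 x + (\log_4 x)^{-(A-1)}\bigr) = O\bigl((\log_4 x)^{-(A-1)}\bigr)$. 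Combining the two ranges gives $D \ll x/(\log_4 x)^{A-1}$, and substituting back into the ratio estimate from the first paragraph completes the argument. The main obstacle is the small-prime calibration: one must choose the inner cutoff so that neither the Sato--Tate measure $\varepsilon$ nor the error term $(\log_2 t)^{-A}$ in~A2 is swamped by the divergence of the integral $\int \!dt/(t\log t(\log_2 t)^A)$ near its lower endpoint.
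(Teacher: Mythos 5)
Your overall scaffolding is sound — the containment $\cN_A(x)\subseteq\cN(x)$, the ratio manipulation giving $|M/N - M_A/N_A| \le 2D/N$, the union bound $D \le x\sum 1/p$, and the application of Lemma~\ref{eq:sum recip} for large $p$ all match the paper in spirit. But the small-prime range is a genuine gap, and the route you sketch for it does not close. If you apply Assumption~A2 via partial summation over $p < \log_2 x$, the term $\int dt/(t\log t\,(\log_2 t)^A)$ is $O(1)$ (not $o(1)$): with $u=\log_2 t$ the integral becomes $\int_1^{\log_4 x} du/u^A$, which converges to a positive constant as $x\to\infty$, and no ``secondary cutoff'' changes that. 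So A2 alone cannot show the small-prime contribution is $\ll (\log_4 x)^{-(A-1)}$.

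The observation that rescues the argument is the one you gesture at in passing but never make quantitative: Assumption~A1 shows that there simply are no bad small primes. Indeed, if $p\mid n$ for some $n\in\cN(x)$ then $a_p\ne 0$, so A1 gives $|a_p|\gg p^{-C}$; hence $|a_p| < (\log_2 x)^{-A}$ forces $p > (\log_2 x)^{A/(2C)}$ once $x$ is large. This kills the entire range $p \le (\log_2 x)^{A/(2C)}$ outright, and then a single application of Lemma~\ref{eq:sum recip} at the scale $y=(\log_2 x)^{A/(2C)}$ (noting $\log_2 y \sim \log_4 x$) gives the full bound $D \ll x/(\log_4 x)^{A-1}$. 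This is precisely what the paper does, and it makes the partial-summation digression with A2 unnecessary. You should replace your A2-based treatment of $p<\log_2 x$ with this uniform A1 threshold; without it the proof as written does not establish the stated error term.
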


\begin{proof}
Let $\cE_A(x)$ be the set of those $n \in  \cN(x)$ which are divisible by a
prime $p \leq x$ such that $|a_p| < (\log_2  x)^{-A}$. Since $a_p \neq 0$, by Assumption~A1, we have 
$|a_p| \gg p^{- C}$ for some
absolute constant $C > 0$. Hence, $|a_p| < (\log_2  x)^{-A}$ implies that $p > (\log_2 x)^{A/(2C)}$ provided that $x$ is sufficiently large.
Thus, using Lemma~\ref{eq:sum recip} with $y=(\log_2 x)^{A/(2C)}$, we have 
\begin{equation}
\label{eq:E}
\# \cE_A(x) \ll 
\sum_{\substack{p \leq x \\ p \notin  \cN_A(x)}} \frac{x}{p} \leq \sum_{\substack{p > (\log_2 x)^{A/(2C)} \\
|a_p| < (\log_2  p)^{-A}}} \frac{x}{p} \ll \frac{x}{(\log_4 x)^{A-1}}.
\end{equation}
Hence, 
$$
\# \cN(x) = \# \cN_A(x) + O\(\frac{x}{(\log_4 x)^{A-1}}\),
$$ 
and since 
$$
\# \cN(x) =\kappa x
\(1+O\(\frac{1}{(\log_2 x)^{A-1}}\)\),
$$
by Lemma~\ref{eq:card Nx}, we have that  
\begin{equation}
\label{eq:NA}
\# \cN(x) =
 \(1 + O\(\frac{1}{(\log_4 x)^{A-1}}\)\) \# \cN_A(x).
\end{equation}
In addition, 
$$
\# \{n \in  \cN(x):\log |a_n| \in \cA \} = 
\# \{n \in  \cN_A(x):\log |a_n| \in \cA \} + 
O(\# \cE_A(x)).
$$
Dividing the last relation above by $\# \cN(x)$ and using~\eqref{eq:E} and~\eqref{eq:NA}, we obtain the claim.
\end{proof}

\subsection{Approximation to $a_n$}

We now define a strongly multiplicative (that is, $h(p^k) = h(p)$) function $h$
such that $h(p) = a_p$. Next we show that $h(n)$ is a
good approximation to $a_n$. 

\begin{lemma}
\label{lem:approx}
Let $A>1$ be fixed and let  $\psi(x) \to \infty$ as $x \to \infty$. 
For all but $o(x)$ integers $n \in  \cN_A(x)$ we have
$$ \log |h(n)| = \log |a_n| + O(\psi(x)).$$
\end{lemma}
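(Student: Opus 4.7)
The strategy is to exploit the fact that $D(n) := \log|a_n| - \log|h(n)|$ depends only on the \emph{squarefull part} of $n$. Writing $n = sq$ where $s = \prod_{p^{\alpha}\|n,\,\alpha\ge 2} p^{\alpha}$ is squarefull and $q$ is squarefree with $\gcd(s,q)=1$, multiplicativity of $a$ and $h$ combined with the strong multiplicativity $h(q) = \prod_{p\mid q}a_p = a_q$ give $D(n) = \log|a_s|-\log|h(s)|$, a quantity determined by $s$ alone.

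Next, I would show that the squarefull part of a typical $n$ is small. Using the classical fact that the number of squarefull integers up to $t$ is $O(t^{1/2})$, partial summation yields $\sum_{s>y,\,s\text{ squarefull}} 1/s \ll y^{-1/2}$. Hence, the number of $n \le x$ whose squarefull part exceeds $y$ is at most $\sum_{s>y,\,s\text{ squarefull}} x/s \ll xy^{-1/2}$. Choosing $y = \exp(\psi(x))$ produces an exceptional set of size $O(x\exp(-\psi(x)/2)) = o(x)$ since $\psi(x)\to\infty$.

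For the remaining $n \in \cN_A(x)$ with squarefull part $s \le y$, I would bound $|D(n)|$ using Assumption~A1. Since $n \in \cN_A(x) \subseteq \cN(x)$ forces $a_p \ne 0$ for every prime $p \mid n$, A1 gives $|a_{p^{\alpha}}| \gg p^{-C\alpha}$; combined with the $\AST$ growth hypothesis $|a_{p^{\alpha}}| \le p^{(\alpha-1)/2-\varrho}$, this yields $|\log|a_{p^{\alpha}}|| \ll \alpha\log p$ for every $\alpha \ge 1$. Summing, $|\log|a_s|| \le \sum_{p^{\alpha}\|s}|\log|a_{p^{\alpha}}|| \ll \sum_{p^\alpha \| s} \alpha \log p = \log s$ and likewise $|\log|h(s)|| \le \sum_{p\mid s}|\log|a_p|| \ll \log s$, so that $|D(n)| \ll \log s \le \log y = \psi(x)$, as required. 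The only subtle point in the argument is the invocation of A1, whose hypothesis $a_p \ne 0$ is precisely what the defining condition of $\cN_A(x)$ guarantees; otherwise the argument is entirely elementary.
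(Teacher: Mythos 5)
Your proof is correct, and it takes a genuinely different route from the paper. The paper sets $c(n) = \log|h(n)| - \log|a_n|$, notes (via Assumption~A1 and the $\AST$ growth bound) that $|c(p^\alpha)| \ll \alpha\log p$ for $\alpha\ge 2$, and then runs a second-moment argument: expanding $\sum_{n\in\cN_A(x)}|c(n)|^2$ over pairs of prime powers $p^\alpha, q^\beta$ with $\alpha,\beta\ge 2$ gives $O(x)$, after which Chebyshev's inequality yields $|c(n)| = O(\psi(x))$ for all but $O(x/\psi(x)^2)$ integers. Your argument instead observes directly that $c(n)$ is a function only of the squarefull part $s$ of $n$, bounds $|c(n)| \ll \log s$ using the same two-sided prime-power estimate, and then uses the classical density $O(t^{1/2})$ of squarefull numbers to show that all but $O(x\,y^{-1/2})$ integers $n\le x$ have $s\le y$; taking $y=e^{\psi(x)}$ finishes. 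Both approaches rest on the same pointwise input $|\log|a_{p^\alpha}|| \ll \alpha\log p$ (which does require $a_p\ne 0$, i.e.\ membership in $\cN(x)$, for the lower bound from~A1). Your route is more elementary and transparent, and in fact gives the quantitatively sharper exceptional set $O(x\,e^{-\psi(x)/2})$; the paper's $L^2$/Chebyshev approach is slightly less efficient here but is a more flexible template if one later needs variance-type information about $c(n)$.
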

\begin{proof}
Let $c(n) = \log |h(n)| - \log |a_n|$. 
Notice that for $n \in  \cN_A(x)$ by Assumption~A1 and our standard assumption
that $|a_{p^\alpha}| < p^{(\alpha-1)/2 - \delta}$, we have
$$
|c(p^\alpha)| \ll \alpha \log p.
$$
Consider,
\begin{equation*}
\begin{split}
\sum_{n \in  \cN_A(x)} |c(n)|^2 = 
\sum_{n \in  \cN_A(x)} \left| \sum_{\substack{p^{\alpha} \| n \\
\alpha \geq 2}} c(p^{\alpha}) \right|^2 & =
\sum_{\substack{p^{\alpha}, q^{\beta} \leq x \\ \alpha, \beta \geq 2}}
|c(p^{\alpha})| \cdot |c(q^{\beta})| \sum_{\substack{n \in  \cN_A(x) \\
\lcm[p^{\alpha}, q^{\beta}] \mid n}} 1 \\
& \ll  x\sum_{\substack{p^{\alpha}, q^{\beta} \leq x \\ \alpha, \beta \geq 2}}
\frac{\alpha \log p \cdot \beta \log q}{\lcm[p^{\alpha}, q^{\beta}]} = O(x).
\end{split}
\end{equation*}
It is now obvious that for all but $O(x / \psi^2(x))$ integers
$n \leq x$ we have $|c(n)| = O(\psi(x))$. 
\end{proof}

\subsection{Some properties of the set $\cN_A(x)$}

We are almost ready to prove Theorem~\ref{thm:CLT}, but before hand we need the following
three results about the set $\cN_A(x)$. 

\begin{lemma}
\label{lem:div n}
Let $A>1$ be fixed and let $f_{A,x}$ be the multiplicative function such that $f_{A,x}(p^k) = f_{A,x}(p)$ and
$f_{A,x}(p) = 1$ if $p \in  \cN_A(x)$ and $f_{A,x}(p) = 0$ otherwise. Then,
uniformly over  integers $d \ge 1$,
$$
\sum_{\substack{d\mid n \\ n \in  \cN_A(x)}} 1
= \frac{f_{A,x}(d)}{d} \cdot \# \cN_A(x) + r_{A,x,d}, 
$$
where 
$$
|r_{A,x,d}| \ll \frac{x}{d(\log_2 (x/d))^{A-1}}.
$$ 
\end{lemma}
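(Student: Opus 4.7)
The plan is to reduce the divisor-constrained count to an unrestricted sum of $f_{A,x}$, then adapt the proof of Lemma~\ref{eq:card Nx} with the threshold $|a_p|\le(\log_2 x)^{-A}$ in place of $a_p=0$. If $d$ has any prime factor $p$ with $|a_p|\le(\log_2 x)^{-A}$, then $f_{A,x}(d)=0$ and any multiple of $d$ inherits that bad prime and so cannot lie in $\cN_A(x)$; both sides of the claim vanish. I may therefore assume $f_{A,x}(d)=1$. Writing $n=dm$, the conditions $n\le x$ and $n\in\cN_A(x)$ reduce to $m\le x/d$ together with $f_{A,x}(m)=1$, since all prime factors of $d$ are already good. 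Hence
$$
\sum_{\substack{d\mid n\\ n\in\cN_A(x)}} 1=\sum_{m\le x/d} f_{A,x}(m).
$$

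For the right-hand side I would write $f_{A,x}=\mathbf 1*g_x$, with $g_x$ supported on squarefree integers whose every prime factor satisfies $|a_p|\le(\log_2 x)^{-A}$, and $g_x(d')=\mu(d')$ on such integers. Exchanging summations gives
$$
\sum_{m\le x/d} f_{A,x}(m)=\frac{x}{d}\sum_{d'\le x/d}\frac{g_x(d')}{d'}+O\!\Bigl(\sum_{d'\le x/d}|g_x(d')|\Bigr).
$$
Completing the first sum produces the constant $\kappa_A(x):=\prod_{p\le x,\,|a_p|\le(\log_2 x)^{-A}}(1-1/p)$; specialising the entire computation to $d=1$ yields $\#\cN_A(x)=\kappa_A(x)\,x+O(x/(\log_2 x)^{A-1})$. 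Subtracting identifies the two main terms as $f_{A,x}(d)\#\cN_A(x)/d$, so only the truncation tail $\sum_{d'>x/d}|g_x(d')|/d'$ and the floor remainder $\sum_{d'\le x/d}|g_x(d')|$ must be bounded by $O\bigl(x/(d(\log_2(x/d))^{A-1})\bigr)$.

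For these error terms I would mirror the proof of Lemma~\ref{eq:card Nx}. The floor remainder is handled by Lemma~\ref{lem:f} applied to $|g_x|$, which yields $\sum_{d'\le x/d}|g_x(d')|\ll(x/d)/\log(x/d)$, a bound comfortably better than required. The truncation tail is handled by a dyadic/Rankin split at $y=\exp((\log(x/d))^{1/3})$: the smooth part falls below $e^{-(1/2)(\log(x/d))^{2/3}}$, while the non-smooth part reduces via $d'=p\cdot(d'/p)$ to $\sum_{y<p,\,\text{bad}} 1/p$ times a bounded Euler factor. The decisive input for this last sum is Lemma~\ref{eq:sum recip}, which is legitimate because for every prime $p\le x$ the inequality $(\log_2 x)^{-A}\le(\log_2 p)^{-A}$ shows that scale-$x$ badness implies the pointwise hypothesis $|a_p|\le(\log_2 p)^{-A}$; this gives $O((\log_2 y)^{-(A-1)})=O((\log_2(x/d))^{-(A-1)})$.

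The principal obstacle is uniformity in $d$. The dyadic argument produces a relative error of order $(\log_2(x/d))^{-(A-1)}$, which degenerates as $d$ approaches $x$; however, in the regime where $x/d$ is bounded the convention $\log_2(x/d)\ge1$ makes the claimed bound essentially $x/d$, already consistent with the trivial divisor count $\lfloor x/d\rfloor+1$, so this boundary case is absorbed by the implicit constant. A secondary subtlety is that the bad-prime threshold depends on $x$ rather than on the variable prime $p$; but as just noted, over the range $p\le x$ this is only a strengthening of the $p$-dependent condition and no modification of Lemma~\ref{eq:sum recip} is required.
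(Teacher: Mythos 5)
Your proposal is correct and follows essentially the same route as the paper: reduce the restricted count to $\sum_{m\le x/d} f_{A,x}(m)$ (the paper does this in one line via complete multiplicativity, $f_{A,x}(dn)=f_{A,x}(d)f_{A,x}(n)$, which subsumes your split into the $f_{A,x}(d)=0$ and $f_{A,x}(d)=1$ cases), expand $f_{A,x}=\mathbf 1*g_{A,x}$ with the divisor/hyperbola method, identify the main term by completing the Euler product and specialising to $d=1$, bound the floor error via Lemma~\ref{lem:f}, and bound the truncation tail by a smooth/rough split with Lemma~\ref{eq:sum recip} supplying the $(\log_2(x/d))^{-(A-1)}$ saving. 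Your explicit remarks on uniformity as $d\to x$ and on why the $x$-dependent badness threshold is compatible with the $p$-dependent hypothesis of Lemma~\ref{eq:sum recip} fill in details that the paper leaves tacit, but there is no substantive difference in method.
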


\begin{proof}
Notice that $f_{A,x}$ is completely multiplicative, so that 
$$
f_{A,x}(dn) = f_{A,x}(d)f_{A,x}(n).
$$ 
Thus,
$$
\sum_{\substack{d\mid n \\ n \in  \cN_A(x)}} 1 = 
\sum_{n \leq x/d} f_{A,x}(d n) = f_{A,x}(d) \sum_{n \leq x/d} f_{A,x}(n) .
$$
Write
$$
f_{A,x}(n) = \sum_{d\mid n} g_{A,x}(d),
$$ 
with $g_{A,x}(p^k) = f_{A,x}(p) - 1$ for $k=1$ and $p \leq x$
and $g_{A,x}(p^k) = 0 $ otherwise. 
Clearly,
\begin{equation}
\label{eq:d}
\sum_{n \leq x/d} f_{A,x}(n)  = \sum_{n \leq x/ d} \sum_{d\mid n} g_{A,x}(d) 
 = \sum_{e \leq x/d} g_{A,x}(e)\( \frac{x}{d e} + O(1) \).
\end{equation}
Since
\begin{equation}
\label{eq:e}
 \sum_{e=1}^\infty \frac{ g_{A,x}(e)}{e} = \prod_{p}\(1 - \frac{ g_{A,x}(p)}{p}\), 
\end{equation}
it remains to estimate the sums 
$$
\sum_{e \leq x/d} |g_{A,x}(e)| \qquad {\text{\rm and}}\qquad \sum_{n > x/d} \frac{|g_{A,x}(n)|}{n}.
$$
However, it is easy to see that
\begin{equation}
\label{eq:10}
\sum_{e \leq x/d} |g_{A,x}(e)| \ll \frac{x}{d \log (x/d)},
\end{equation}
and 
\begin{equation}
\label{eq:11}
 \sum_{n > x/d} \frac{|g_{A,x}(n)|}{n}\ll \frac{x}{d (\log_2(x/d))^{A-1}},
 \end{equation}
 by Lemmas~\ref{lem:f} and~\ref{eq:sum recip}, and the argument from the proof of Lemma~\ref{eq:card Nx}, respectively. Making $d=1$ in~\eqref{eq:d}, 
 we see that 
\begin{equation}
\begin{split}
 \label{eq:main}
\#\cN_A(x) & =   \sum_{n\in \cN_A(x)} 1= x\sum_{e=1}^\infty \frac{g_{A,x}(e)}{e} 
+O\(\frac{x}{(\log_2 x)^{A-1}}\)\\
&=  x\prod_{p}\(1 - \frac{ g_{A,x}(p)}{p}\)+O\(\frac{x}{(\log_2 x)^{A-1}}\),
\end{split}
\end{equation}
 and we derive the desired conclusion from~\eqref{eq:d}  and~\eqref{eq:e}, error estimates~\eqref{eq:10}
 and~\eqref{eq:11}, and the main term~\eqref{eq:main}. 
\end{proof}

We define
$$
\mu_{A,x} =  \sum_{p \in  \cN_A(x)} \frac{\log |a_p|}{p} \mand
\sigma_{A,x}^2 =  \sum_{p \in  \cN_A(x)} \frac{(\log |a_p|)^2}{p} 
\(1 - \frac{1}{p} \).
$$

\begin{lemma} 
\label{lem:mu sigma} 
For a fixed $A> 1$, we have
\begin{equation*}
\begin{split}
\mu_{A,x} &  = - \frac 12 \log_2 x + O(\log_3 x), \\
\sigma_{A,x}^2  & = \( \frac 12 + \frac{\pi^2}{12} \) \log_2  x + O\((\log_3 x)^2\).
\end{split}
\end{equation*}
\end{lemma}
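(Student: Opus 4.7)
The plan is to recognize both $\mu_{A,x}$ and $\sigma_{A,x}^2$ as Sato--Tate-weighted partial sums of the form $\sum_{p \le x} g(\vartheta_p)/p$, where $g$ is a truncation of $\log|2\cos\vartheta|$ (respectively its square) to the admissible region $J := \{\vartheta \in [0,\pi] : 2|\cos\vartheta| > (\log_2 x)^{-A}\}$, and then to evaluate each such sum by a Sato--Tate analogue of Mertens' theorem together with two explicit integrals.

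The first step is to derive, from Assumption~A2 combined with partial summation in $p$, the uniform discrepancy estimate
$$
N_x(\vartheta) := \sum_{\substack{p \le x \\ \vartheta_p \le \vartheta}} \frac{1}{p} = \mu_{\rm ST}\bigl([0,\vartheta]\bigr) \log_2 x + O(1), \qquad \vartheta \in [0,\pi],
$$
where $\mu_{\rm ST}(E) := (2/\pi)\int_E \sin^2 \vartheta\, d\vartheta$ is the Sato--Tate probability measure. The $O(1)$ absolute error reduces to convergence of $\int_2^{\infty} (\log_2 t)^{-A}/(t\log t)\, dt$, and this is precisely where the hypothesis $A>1$ enters. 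Integration by parts against any $g \in \mathrm{BV}[0,\pi]$ then yields the Sato--Tate Mertens identity
$$
\sum_{p \le x} \frac{g(\vartheta_p)}{p} = \log_2 x \cdot \frac{2}{\pi} \int_0^{\pi} g(\vartheta) \sin^2 \vartheta\, d\vartheta + O\bigl(\|g\|_\infty + V(g)\bigr).
$$

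I would apply this to $g_1(\vartheta) = \log|2\cos\vartheta|\,\mathbf{1}_J(\vartheta)$ and to $g_2(\vartheta) = (\log|2\cos\vartheta|)^2\,\mathbf{1}_J(\vartheta)$; for these one has $\|g_i\|_\infty, V(g_i) \ll (\log_3 x)^i$ (the continuous variation on $J$ and the jumps at $\partial J$ are of the same order), producing $\mu_{A,x}$ and $\sigma_{A,x}^2$ with the claimed errors $O(\log_3 x)$ and $O((\log_3 x)^2)$. The main-term integrals are computed using the Fourier series $\log|2\cos\vartheta| = -\sum_{k\ge 1}(-1)^k \cos(2k\vartheta)/k$ on $[0,\pi]$, the identity $\sin^2\vartheta = (1-\cos 2\vartheta)/2$, orthogonality, and Parseval's theorem, giving
$$
\frac{2}{\pi}\int_0^{\pi}\! \log|2\cos\vartheta|\sin^2\vartheta\, d\vartheta = -\frac{1}{2} \mand \frac{2}{\pi}\int_0^{\pi}\!(\log|2\cos\vartheta|)^2\sin^2\vartheta\, d\vartheta = \frac{1}{2} + \frac{\pi^2}{12}.
$$
Truncating the domain from $[0,\pi]$ to $J$ alters these integrals by only $O((\log_3 x)^i (\log_2 x)^{-A})$, since the (squared-)logarithmic singularities at $\vartheta = \pi/2$ are integrable and $[0,\pi]\setminus J$ has measure $O((\log_2 x)^{-A})$; these corrections are swallowed by the error. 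Finally, the extra factor $1 - 1/p$ in $\sigma_{A,x}^2$ contributes $-\sum_{p\in\cN_A(x)}(\log|a_p|)^2/p^2 = O((\log_3 x)^2)$ via $|\log|a_p|| \ll \log_3 x$ on $\cN_A(x)$ and convergence of $\sum 1/p^2$, again within the allowed error.

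The main obstacle is the quantitative Mertens step: one must convert the $O((\log_2 x)^{-A})$ \emph{relative} Sato--Tate discrepancy of Assumption~A2 into an $O(1)$ \emph{absolute} discrepancy in $N_x(\vartheta)$ uniformly in $\vartheta$. This is exactly the step that forces $A > 1$, and any weaker Sato--Tate error would propagate through the partial summation and inflate the final error beyond the allowed $O(\log_3 x)$ or $O((\log_3 x)^2)$.
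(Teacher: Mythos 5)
Your proposal is correct and follows essentially the same route as the paper's proof: both split off the primes with small $|a_p|$ (handled by Lemma~\ref{eq:sum recip}), convert the remaining sums to a Mertens-type Sato--Tate average via partial summation against the empirical $\vartheta_p$-distribution, and evaluate the same two integrals, which under the substitution $u = 2|\cos\vartheta|$ are exactly the paper's $\frac{2}{\pi}\int_0^2 \log u\,\sqrt{1-(u/2)^2}\,du = -\tfrac12$ and its squared analogue $\tfrac12+\tfrac{\pi^2}{12}$. The only cosmetic differences are that you make the uniform $O(1)$ discrepancy estimate for $N_x(\vartheta)$ explicit, where the paper leaves the integration-by-parts bookkeeping implicit, and that you compute the integrals by Fourier series and Parseval rather than just quoting their values.
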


\begin{proof}
By Lemma~\ref{eq:sum recip}, applied with $A=2$ and $y = 3$
we see that the primes $p$ with $|a_p| \le  (\log_2 x)^{-2}$ contribute 
$O(\log_3 x)$ to $\mu_{A,x}$ and $O((\log_3 x)^2)$ to $\sigma_{A,x}^2$.

The contribution of the remaining $p$  to $\mu_{A,x}$, by integration by parts, 
can be estimated as 
\begin{equation*}
\begin{split}
\frac{2\log_2 x}{\pi} & \int_{(\log_2 x)^{-2}}^{2} \log u \cdot \sqrt{1 - (u/2)^2} du   + O(\log_3 x)\\
&=\frac{2\log_2 x}{\pi} \int_{0}^{2} \log u \cdot \sqrt{1 - (u/2)^2} du   + O(\log_3 x)\\
& = - \frac{1}{2}\log_2 x + O(\log_3 x)
\end{split}
\end{equation*}
which gives the first part of the claim. 
 
The proof of the second part of the claim  uses the fact that
$$
\frac{2}{\pi} \int_{0}^{2} (\log u)^2 \sqrt{1 - (u/2)^2} du = \frac{1}{2} + \frac{\pi^2}{12},
$$
and is completely similar. 
\end{proof}

Finally using the work of Granville and Soundararajan~\cite{GranvilleSoundararajan} 
and Lemmas~\ref{lem:div n} and~\ref{lem:mu sigma}, we are able to conclude. 

\begin{lemma}
\label{lem:CLT Ax}
For a fixed $A> 1$, we have,
$$
\frac{1}{\# \cN_A(x)}  \# \left \{ n \in  \cN_A(x):
\frac{\log |g(n)| + \tfrac 12 \log_2 x}{\sqrt{c \log_2 x}}
\in [\alpha, \beta] \right\} \to \int_{\alpha}^{\beta} e^{-u^2/2} 
\frac{du}{\sqrt{2\pi}}
$$
as $x\to\infty$, with 
$$c = \frac 12 + \frac{\pi^2}{12}.
$$ 
\end{lemma}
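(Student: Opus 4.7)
The plan is to interpret $\log|h(n)|$ as a strongly additive function on $\cN_A(x)$ and prove an Erd\H{o}s--Kac-style central limit theorem via the Granville--Soundararajan framework \cite{GranvilleSoundararajan}, with Lemma \ref{lem:div n} supplying the analogue of independence and Lemma \ref{lem:mu sigma} supplying the mean and variance.

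First I would use the fact that $h$ is strongly multiplicative with $h(p)=a_p$ to write, for $n\in\cN_A(x)$,
$$
\log|h(n)| \;=\; \sum_{\substack{p\mid n\\ p\in\cN_A(x)}} \log|a_p|,
$$
a strongly additive function of $n$. The goal is then a CLT for this sum, viewed as a random variable under the uniform measure on $\cN_A(x)$. By Lemma \ref{lem:div n}, for any squarefree $d$ all of whose prime factors lie in $\cN_A(x)$,
$$
\Pr\bigl(d\mid n\bigr) \;=\; \frac{1}{\#\cN_A(x)}\sum_{\substack{d\mid n\\ n\in\cN_A(x)}} 1 \;=\; \frac{1}{d} \;+\; O\!\left(\frac{1}{d(\log_2(x/d))^{A-1}}\right),
$$
so that the events $\{p\mid n\}$ for distinct primes $p\in\cN_A(x)$ behave, up to controllable error, like independent Bernoulli$(1/p)$ events.

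Next I would apply the method of moments exactly as in \cite{GranvilleSoundararajan}: expand the $k$-th centered moment
$$
M_k \;=\; \frac{1}{\#\cN_A(x)}\sum_{n\in\cN_A(x)}\Bigl(\log|h(n)| - \mu_{A,x}\Bigr)^k
$$
as a multilinear sum over $k$-tuples of primes $p_1,\dots,p_k\in\cN_A(x)$, use Lemma \ref{lem:div n} with $d=\mathrm{lcm}(p_{i_1},\ldots,p_{i_j})$ to evaluate the joint counts, and match the result against the moments of a sum of independent Bernoulli random variables of parameters $1/p$ and weights $\log|a_p|$. Lemma \ref{lem:mu sigma} identifies the asymptotic mean $\mu_{A,x} = -\tfrac12\log_2 x + O(\log_3 x)$ and variance $\sigma_{A,x}^2 = c\log_2 x + O((\log_3 x)^2)$, and the truncation $p\in\cN_A(x)$ gives the uniform bound $|\log|a_p||\le A\log_3 x$ which easily verifies a Lindeberg-type condition since $(A\log_3 x)^2=o(\log_2 x)$. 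Together these yield $M_{2k}\sim (2k)!/(2^k k!)\cdot (c\log_2 x)^k$ and $M_{2k+1}=o((c\log_2 x)^{k+1/2})$, which is the standard normal moment sequence after rescaling.

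The main obstacle will be controlling the accumulated error from Lemma \ref{lem:div n} inside the moment expansion, since the bound $x/(d(\log_2(x/d))^{A-1})$ degenerates when $d$ is close to $x$. To handle this I would split the sum over $k$-tuples by the size of $d=\mathrm{lcm}(p_1,\ldots,p_k)$: tuples with $d\le x^{1/2}$ give the Gaussian main term with acceptable error because $\log_2(x/d)\asymp\log_2 x$, while tuples with $d>x^{1/2}$ are counted trivially using $|\log|a_p||\le A\log_3 x$ and the bound $\sum_{p\in\cN_A(x)}1/p \ll \log_2 x$, producing a contribution negligible compared to $(\log_2 x)^{k/2}$ for any fixed $k$. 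This dyadic argument, combined with only needing convergence of each fixed moment, lets the error terms in Lemma \ref{lem:div n} be absorbed and completes the application of the Granville--Soundararajan method.
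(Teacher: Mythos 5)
Your approach is in the right spirit and matches the paper's general framework: interpret $\log|h(n)|$ as a strongly additive function, invoke the Granville--Soundararajan method of moments with Lemma~\ref{lem:div n} supplying quasi-independence and Lemma~\ref{lem:mu sigma} supplying the mean and variance, and conclude via the method of moments. However, there is a genuine gap in the error analysis.

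The paper does \emph{not} work with the untruncated sum $\sum_{p\mid n}\log|a_p|$. It first truncates, working with $\sum_{p\mid n,\, p\in\cP}\log|g(p)|$ where $\cP=\{p\in\cN_A(x):p<y\}$ and $y=x^{1/\log_3 x}$. This truncation is essential: in the $k$-th moment expansion the lcms $d$ that occur are products of at most $k$ primes from $\cP$, hence $d\le y^k=x^{k/\log_3 x}$, so that $\log_2(x/d)\gg\log_2 x$ uniformly and Lemma~\ref{lem:div n} gives $\sum_{d\in\cD_k(\cP)}|r_{A,x,d}|\ll x(\log_2 x)^{-2k}$ once $A>3k+1$. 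The paper then separately shows that the discarded tail $\log|g(n)|-\sum_{p\mid n,\,p\in\cP}\log|g(p)|$ is $\ll(\log_3 x)^2=o(\sqrt{\log_2 x})$, so the truncation is harmless, and replaces $\mu_{A,y},\sigma_{A,y}$ by the quantities at scale $x$ using Lemma~\ref{lem:mu sigma}.

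In your proposal you keep the full support (all primes in $\cN_A(x)$), so the lcms can be as large as $x$, and you propose to handle $d>x^{1/2}$ ``trivially.'' This does not work: for $d>x^{1/2}$ the only available bound is $|r_{A,x,d}|\ll x/d$, and since a squarefree $d$ with $j$ prime factors and $d>x^{1/2}$ must have a prime factor exceeding $x^{1/(2j)}$, one gets $\sum_{d\in\cD_j,\,d>x^{1/2}}1/d\ll(\log_2 x)^{j-1}$. Feeding this into the moment expansion yields an error contribution of order $(\log_3 x)^k(\log_2 x)^{k-1}$ to the (normalized) $k$-th moment, whereas the main term is of order $\sigma_{A,x}^k\asymp(\log_2 x)^{k/2}$. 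Already for $k=2$ the error term $(\log_3 x)^2\log_2 x$ swamps $\sigma_{A,x}^2\asymp\log_2 x$, and the situation gets worse as $k$ grows. The bounds $|\log|a_p||\ll\log_3 x$ and $\sum_p 1/p\ll\log_2 x$ that you cite cannot repair this, because $r_{A,x,d}$ is a residual term with no remaining sign structure to exploit. The missing step is precisely the pre-truncation of the prime support at $y=x^{1/\log_3 x}$ (so that the Granville--Soundararajan error is under control), followed by the tail-correction argument; without it the method-of-moments machinery does not close.
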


\begin{proof}
Let $\cP$ be the set of primes $p \in  \cN_A(x)$
which are less than $y = x^{1/\log_3  x}$. Let $M(x) = \log_3  x$.
Note that $|\log |g(p)|| \ll M(x)$ for every $p \in \cP$.  
Applying a result of  Granville and 
Soundararajan~\cite[Proposition~4]{GranvilleSoundararajan}, 
in the notation of Lemmas~\ref{lem:div n} and~\ref{lem:mu sigma}, 
we get for every fixed even integer $k$,
\begin{equation*}
\begin{split}
\sum_{a \in  \cN_A(x)} \( \sum_{\substack{p\mid a \\ p \in \cP}}
\log |g(p)| - \mu_{A,y} \)^{k}&=   C_k \# \cN_A(x) \sigma_{A,y}^k \( 1
+ O \( \frac{ M(x)}{\sigma_{A,x}^2} \) \) \\
&\qquad  +  
O \( M(x)^k (\log_2 x)^k \sum_{d \in \cD_k(\cP)} |r_{A,x,d}|\), 
\end{split}
\end{equation*}
where the implied constant may depend on both $A$ and $k$ and 
$$
C_k = \frac{\Gamma(k + 1)}{2^{k/2} \Gamma(k/2 + 1)},
$$ 
is the $k$-th Gaussian moment and $\cD_k(\cP)$ is the set of integers
formed out of $k$ prime factors from the set $\cP$. For $k$ odd, we have,
\begin{equation*}
\begin{split}
\sum_{a \in  \cN_A(x)} & \( \sum_{\substack{p \mid a \\ p \in \cP}}
\log |g(p)| - \mu_y \)^k \\
& \ll \# \cN_A(x) \sigma_y^{k} \frac{k^{3/2} M(x)}{\sigma_{A,y}} + 
M(x)^k (\log_2 x)^k \sum_{d \in \cD_k(\cP)} |r_{A,x,d}|,
\end{split}
\end{equation*}
where the implied constants may also depend on $k$. 
Using Lemma~\ref{lem:div n}, we see that
$$
\sum_{d \in \cD_k(\cP)} |r_{A,x,d}| \ll
\sum_{d \in \cD_k(\cP)} \frac{x}{d (\log_2 (x^{1-k/\log_3 x}))^{3k}} \ll x (\log_2 x)^{-2k},
$$
provided $A>3k+1$. Therefore, we conclude that
$$
\sum_{a \in  \cN_A(x)} \( \sum_{\substack{p\mid a \\ p \in \cP}}
\log |g(p)| - \mu_y \)^{k} = \(\eta_k +o(1)\) C_k  \# \cN_A(x)  \sigma_{A,y}^k 
$$
as $x\to\infty$, where 
$$
\eta_k= \begin{cases}
1  & \text{ if } k \text{ even,} \\
0 &   \text{ if } k \text{ odd.}
\end{cases}
$$
 
By the method of moments the above estimates imply that
\begin{equation}
\begin{split} 
\label{methodmoments}
\frac{1}{\# \cN_A(x)} &
\# \left \{ n \in  \cN_A(x) ~:~\frac{\sum_{\substack{p\mid n\\ p \in \cP}} \log |g(p)| - \mu_{A,y}}{\sigma_{A,y}} \in [\alpha, \beta] \right \} \\
& \qquad \qquad\qquad \qquad\qquad \qquad\qquad \qquad \quad \to
\int_{\alpha}^{\beta} e^{-u^2/2} \frac{du}{\sqrt{2\pi}}
\end{split}
\end{equation}
as $x\to\infty$.  Notice that since $p \in  \cN_A(x)$ we have $|\log |g(p)|| \ll M(x)$. Hence,
for $n \in  \cN_A(x)$, 
$$
\left|\sum_{\substack{p \mid n\\ p \in \cP}} \log |g(p)| - \log |g(n)|\right| \ll M(x)\log_3 x  \ll (\log_3 x)^2. 
$$
In addition, by Lemma~\ref{lem:mu sigma}, for the above choice of $y$, 
after simple calculations, 
we derive
$$
\mu_{A,y} = - \frac 12 \log_2 x + O(\log_3 x)
$$ 
and similarly, 
$$
\sigma_{A,y}^2= \( \frac 12 + \frac{\pi^2}{12} \) \log_2 x + O( \log_3 x)^2).
$$ 
Combining this with~\eqref{methodmoments}, we obtain the claim.
\end{proof}

\subsection{Concluding the proof}
 The result follows upon combining Lemmas~\ref{eq:Set A}, \ref{lem:approx} and~\ref{lem:CLT Ax}.

\section{Comments}

Let $\tau(n)$ by the Ramanujan $\tau$ function
defined by
$$
\sum_{n=1}^{\infty} \tau(n) q^n=q\prod_{k\ge 1} (1-q^k)^{24},
$$
introduced in~\cite{Ra}.  
By the celebrated Deligne~\cite{De} bound, we can write 
$$
\tau(p)=2 p^{11/2} \cos \vartheta_p
$$
with $\vartheta_p \in [0,\pi]$. It is also known that 
$$
\tau(n)\le d(n)n^{11/2},
$$
where $d(n)$ denotes the number of divisors of $n$ 
(see~\cite[Equation~(3.32)]{Iwan}).
Recall that $d(n)\le(\log n)^{\log 2+o(1)}$  for almost all $n$
(see~\cite[Equation~(7.14) and Problem~7.3]{KoLu}
 or~\cite[Chapter~I.5, Equation~(9) and Theorem~5]{Ten}). In particular, the inequality
 $$
 \tau(n)<n^{11/2} (\log n)^{\log 2+o(1)}
 $$ 
holds for almost all $n$. 
Recalling that as $\tau(n)$ is  multiplicative (see~\cite[Equation~(3.34)]{Iwan}),
and by~\cite{B-LGHT,CHT,HS-BT}
we know that  $\tau(p)/p^{11/2}$ satisfies the Sato--Tate distribution, we 
derive from Theorem~\ref{thm:AST} that in fact the inequality
$$
 \tau(n) \le n^{11/2} (\log n)^{-0.5 + o(1)}
$$ 
holds as $n$ tends to infinity in a density $1$ subset of ${\mathbb N}$. 

It is certainly interesting to have a more quantiative version of
Theorem~\ref{thm:AST aver} with an explicit saving on the right hand 
side. Such a general result seems impossible as one needs 
concrete  estimates on the rate of convergency to the 
Sato--Tate distribution in the definition of the sequence
 $\{a_n\}_{n\ge 1}$. However, for the above concrete examples
our method can lead to such a result. 

\section{Acknowledgements}

This paper started during a visit of F.~L. to the Department of Computing of Macquarie University in  March 2013. F. L. thanks this Institution 
for the hospitality.   

During the preparation of this paper,
F.~L. was partially supported in part by a  Marcos Moshinsky fellowship;  M.~R. was partially supported by NSF Grant DMS-1128155, I.~S. was partially supported by ARC Grant DP130100237.

\end{document}